\documentclass{amsart}

\usepackage{amssymb}
\usepackage[pagebackref]{hyperref}
\usepackage{cite}
\usepackage[all]{xy}
\usepackage{enumerate}
\usepackage{mathrsfs}
\usepackage{color}
\usepackage{tikz}
\usepackage{microtype} 
\usepackage{geometry}
\newtheorem{theorem}{Theorem}[section]
\newtheorem{lemma}[theorem]{Lemma}
\newtheorem{corollary}[theorem]{Corollary}
\newtheorem{proposition}[theorem]{Proposition}

\theoremstyle{definition}

\theoremstyle{plain}
\newtheorem{remark}{Remark}[section]
\newtheorem*{problem}{Problem}

\numberwithin{equation}{section}

\DeclareMathOperator{\divg}{div}
\DeclareMathOperator{\supp}{supp}

\DeclareMathOperator{\Ric}{Ric}

\DeclareMathOperator{\Vol}{Vol}

\newcommand{\bbR}{\mathbb{R}}
\newcommand{\bbC}{\mathbb{C}}
\newcommand{\bbZ}{\mathbb{Z}}

\newcommand{\iu}{\sqrt{-1}}

\newcommand{\dd}{\mathop{}\!\mathrm{d}}

\renewcommand{\Re}{\operatorname{Re}}

\allowdisplaybreaks[4]

\begin{document}

\title[\(L^p\) Liouville theorems on gradient K\"ahler-Ricci solitons]{\(L^p\) Liouville theorems for pluriharmonic functions on gradient K\"ahler-Ricci solitons}


\author{Guangwen Zhao}
\address{School of Mathematics and Statistics, Wuhan University of Technology, Wuhan 430070, China}
\curraddr{}
\email{gwzhao@whut.edu.cn}
\thanks{This work is partially supported by the National Natural Science Foundation of China (12001410)}


\subjclass[2020]{53C55, 31C10, 35B53}

\keywords{gradient K\"ahler-Ricci soliton, pluriharmonic function, Liouville theorem, finite \(p\)-energy}


\dedicatory{}

\begin{abstract}
    We study Liouville-type theorems for real-valued pluriharmonic functions on complete gradient K\"ahler-Ricci solitons under gradient integrability assumptions. For a complete gradient K\"ahler-Ricci soliton \((M,g,J,f)\) and a real-valued pluriharmonic function \(u\), we investigate conditions under which \(u\) must be constant. By introducing a globally defined holomorphic quantity induced by the soliton potential, we obtain new Liouville-type results beyond the range available for harmonic functions. In the steady case, we prove that \(u\) is constant whenever
    \[
        \int_M|\nabla u|^p\dd v<\infty
    \]
    for some \(0<p<\infty\). In the shrinking case, we prove the same conclusion for \(0<p\le 2\). Finally, we construct a complete K\"ahler example showing that the extension to the range \(0<p<1\) relies essentially on the soliton structure and does not hold on general complete K\"ahler manifolds.
\end{abstract}

\maketitle

\tableofcontents

\section{Introduction}

The classic Liouville theorem states that any bounded holomorphic function defined on the complex plane \(\bbC\) must be a constant. Its various extensions are important topics in differential geometry and complex analysis. Among them, obtaining the Liouville theorem through integrability conditions is an important approach. Regarding this, the two important theorems established by Yau \cite{MR417452} in 1976 can be described as follows: He proved that \(L^p\ (p>1)\) harmonic functions on complete Riemannian manifolds must be constants, and also proved that \(L^p\ (p>0)\) holomorphic functions on complete K\"ahler manifolds must be constants. The latter was further extended by Li--Zhang--Zhang \cite{MR3951749} to a class of complete Gauduchon manifolds in 2019. The difference between these two exponent ranges is essential in the present problem.

In this paper we study Liouville-type theorems on complete gradient K\"ahler–Ricci solitons. A Riemannian manifold \((M,g)\) is called a gradient Ricci soliton if there exist a smooth function \(f\), called the potential function, and a constant \(\lambda\) such that
\[
    \Ric+\nabla^2f=\lambda g.
\]
If, in addition, \((M,g,J)\) is K\"ahler, then it is called a gradient K\"ahler–Ricci soliton. In this case \(\nabla f\) is a real holomorphic vector field, or equivalently, the Lie derivative
\[
    \mathcal L_{\nabla f}J=0.
\]
This is also equivalent to \((\nabla f)^{1,0}\) being holomorphic. The interaction between the soliton structure and the underlying complex geometry makes this a natural setting for Liouville-type problems. The soliton is called shrinking, steady, or expanding according as \(\lambda>0\), \(\lambda=0\), or \(\lambda<0\). A smooth function \(u\) on a complex manifold \((M,J)\) is called pluriharmonic if it satisfies \(\partial \bar\partial u=0\). Furthermore, if the complex manifold is endowed with a K\"ahler metric \(g\) (i.e., on the K\"ahler manifold \((M,g,J)\)), pluriharmonicity can also be characterized by
\[
	\nabla^2u(X,Y)+\nabla^2u(JX,JY)=0,\qquad \forall X,Y\in \Gamma(TM),
\]
where \(\nabla \) is the Levi--Civita connection of \(g\). It is well known that, on a K\"ahler manifold, every holomorphic function is complex-valued pluriharmonic, and every pluriharmonic function is harmonic.

The transition from holomorphic functions to real-valued pluriharmonic functions is natural at the level of first derivatives. Indeed, a real-valued function \(u\) is pluriharmonic precisely when it is locally the real part of a holomorphic function, equivalently, \(\partial u\) is a holomorphic \((1,0)\)-form. Thus the condition
\[
    \int_M |\nabla u|^p\,\dd v<\infty
\]
may be viewed as an \(L^p\) condition on a holomorphic differential. There is, however, an important global distinction: a pluriharmonic function need not be the real part of a globally defined holomorphic function. On a gradient K\"ahler–Ricci soliton \((M,g,J,f)\), the soliton potential provides a natural way to overcome this obstruction by producing the globally defined holomorphic function
\[
    H=\left\langle (\nabla f)^{1,0},(\nabla u)^{0,1}\right\rangle =\frac{1}{2}\left(\langle \nabla f,\nabla u\rangle +\iu \langle \nabla f,J\nabla u\rangle \right).
\]
This suggests that Yau’s all-exponent holomorphic Liouville phenomenon may still lead to rigidity for real-valued pluriharmonic functions with finite \(p\)-energy. We are therefore led to the following problem.

\begin{problem}
	Let \((M,g,J,f)\) be a connected complete gradient steady or shrinking K\"ahler–Ricci soliton, and let \(u\) be a real-valued pluriharmonic function. If
    \[
        \int_M|\nabla u|^p\,\dd v<\infty
    \]
    for some \(0<p<\infty\), must \(u\) be constant?
\end{problem}

A number of Liouville-type results have been established for gradient Ricci solitons. Ge--Zhang \cite{MR3759351} proved that every positive \(f\)-harmonic function on a complete gradient shrinking Ricci soliton is constant and established related weighted \(L^p\) Liouville theorems for \(f\)-subharmonic and \(f\)-superharmonic functions. Passing from the drift Laplacian to the ordinary Laplacian is substantially more delicate. Under the additional assumption of constant scalar curvature, Mai--Ou \cite{MR4739247} subsequently proved that every bounded harmonic function on a complete noncompact gradient shrinking Ricci soliton is constant, they also showed that the space of harmonic functions of any fixed polynomial growth order is finite-dimensional. 

Stronger conclusions become available in the K\"ahler setting, where the complex structure can be combined with the soliton equation. Munteanu--Sesum \cite{MR3023848} proved that every harmonic function with finite Dirichlet energy on a complete gradient shrinking K\"ahler–Ricci soliton is constant, their corresponding result for steady gradient Ricci solitons does not require the K\"ahler assumption. Munteanu--Wang \cite{MR3217651} later showed that every bounded holomorphic function on a complete shrinking K\"ahler–Ricci soliton is constant and obtained finite-dimensionality results for holomorphic functions of prescribed polynomial growth. More recently, using the auxiliary real-valued harmonic function \(F=\langle\nabla f,\nabla u\rangle \), Luo \cite{MR4935611} replaced boundedness by gradient integrability and proved that, on a complete steady or shrinking gradient K\"ahler–Ricci soliton, every real-valued pluriharmonic function \(u\) satisfying
\[
    \int_M|\nabla u|^p\dd v<\infty ,\qquad 1<p\le 2,
\]
must be constant.

Compared with the approach of Luo \cite{MR4935611}, which relies on the real-valued harmonic quantity \(F\), our argument is based on the holomorphic quantity \(H\). The holomorphic structure of \(H\) is crucial for passing below the threshold \(p=1\). From the preceding discussion, we see that
\[
   F=2\Re H.
\]
Hence, we may take \(H\) as an auxiliary function in our treatment of Liouville theorem. We partially answer the problem we posed above.

In the steady case, we give a complete affirmative answer, even under a weaker integrability condition.

\begin{theorem}\label{thm-a}
	Let \((M,g,J,f)\) be a connected complete gradient steady K\"ahler-Ricci soliton and let \(u\) be a pluriharmonic function on \(M\). Suppose that \(\Psi :[0,\infty )\to [1,\infty )\) is locally Lipschitz, nondecreasing, and satisfies 
    \begin{equation}\label{eq-z}
    	\int_0^\infty \frac{\dd t}{\Psi (t)}=\infty .
    \end{equation}
    If 
	\begin{equation}\label{eq-y}
		\int_M\frac{|\nabla u|^p}{\Psi (d(o,x))}\dd v<\infty ,
	\end{equation}
	for some \(0<p<\infty \), where \(o\) is a fixed point on \(M\) and \(d(o,x)\) is the distance function from \(o\), then \(u\) is a constant function.
\end{theorem}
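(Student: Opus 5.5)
The plan is to work with the holomorphic $(1,0)$-form $\partial u$ and with the globally defined holomorphic function $H=\langle(\nabla f)^{1,0},(\nabla u)^{0,1}\rangle$. We will prove $\partial u\equiv 0$ using Yau-type subharmonicity estimates, which are valid for every exponent $p>0$.

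\textbf{Step 1 (subharmonicity for all $p$).} Since $\partial u$ is a holomorphic $(1,0)$-form, the Bochner formula on a Kähler manifold gives
\[
\Delta|\partial u|^2=2|\nabla\partial u|^2+2\Ric(\nabla u,\nabla u)\cdot c .
\]
On a steady soliton $\Ric=-\nabla^2 f$, and the Ricci term can be rewritten through the soliton identity. Because $u$ is pluriharmonic, $\nabla^2 f(\nabla u,\nabla u)$ is related to derivatives of $H$. In fact $\nabla_{\nabla u}\nabla f$ is controlled by $\partial H$ and $\nabla^2u(\nabla f,\cdot)$. The Ricci term is therefore not signed, so working directly with $|\nabla u|^p$ is problematic. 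Instead, I would use the two holomorphic objects separately: $|H|^{q}$ is subharmonic for every $q>0$, since $\log|H|$ is plurisubharmonic. Likewise $\log|\partial u|$ is subharmonic wherever the form is measured in a flat-type frame, but that does not hold in general. So the key quantity is $H$.

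\textbf{Step 2 ($H\equiv 0$).} We need a weighted integrability of $|H|^q$. On a steady soliton one has $|\nabla f|^2+R=\text{const}$ (Hamilton's identity) and $R\ge 0$ (Chen), so $|\nabla f|$ is bounded. Hence $|H|\le \tfrac12|\nabla f||\nabla u|\le C|\nabla u|$, and \eqref{eq-y} gives $\int_M|H|^p/\Psi(d(o,x))<\infty$. Next we apply a Yau-type argument for a nonnegative subharmonic function $w=|H|^{p/2}$. For any $\epsilon>0$, smoothing $|H|^{p/2}=\lim(|H|^2+\epsilon)^{p/4}$, take a cutoff $\phi$ depending on $r=d(o,x)$ and integrate $\phi^2 w\Delta w\ge0$ to get
\[
\int\phi^2|\nabla w|^2\le 4\int w^2|\nabla\phi|^2 .
\]
Choose $\phi=\eta(\int_0^{r}\dd t/\Psi(t))$ with $\eta$ a logarithmic-type cutoff on the scale $\int_0^R\dd t/\Psi$. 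Then $|\nabla\phi|^2\le C/(\Psi(r)^2 S(R)^2)\le C/(\Psi(r)S(R)^2)$, using $\Psi\ge1$, where $S(R)=\int_0^R\dd t/\Psi\to\infty$ by \eqref{eq-z}. The right-hand side then tends to $0$, so $w$ is constant. Since $\nabla f$ is locally Lipschitz, the local Lipschitz and monotone hypotheses on $\Psi$ make $\phi$ Lipschitz.

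A constant $|H|$ means $H$ is a constant $c$. If $c\neq0$, then $\int_M \Psi(d)^{-1}\dd v<\infty$ would be forced. This should contradict volume growth: steady solitons have at least linear volume growth (Munteanu--Sesum/Carrillo--Ni), and with $\Psi$ satisfying \eqref{eq-z} one gets $\int_M\Psi^{-1}\ge c\int\Psi(t)^{-1}\dd t=\infty$ via the coarea formula and $\Vol(\partial B_t)\ge c$. Hence $H\equiv 0$, i.e. $\langle\nabla f,\nabla u\rangle=\langle\nabla f,J\nabla u\rangle=0$.

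\textbf{Step 3 (conclude).} From $\langle\nabla f,\nabla u\rangle=0$ and pluriharmonicity, differentiate and use $\nabla^2 f=-\Ric$. The identity $\Delta\langle\nabla f,\nabla u\rangle=2\langle\nabla^2 f,\nabla^2u\rangle+\ldots$ combined with $\Ric(\nabla u)=\tfrac12\nabla\Delta$-type relations should yield $\Ric(\nabla u,\cdot)=0$. The Bochner identity then gives $\Delta|\nabla u|^2=2|\nabla^2u|^2\ge 0$ and $\Delta|\nabla u|^{p}\ge0$ for all $p>0$, using the refined Kato inequality for pluriharmonic functions, $|\nabla^2u|^2\ge|\nabla|\nabla u||^2\cdot 2$, which makes $|\nabla u|^{p}$ subharmonic for every $p>0$. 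Repeating the cutoff argument of Step 2 with $w=|\nabla u|^{p/2}$ shows $|\nabla u|$ is constant. The volume argument then forces $|\nabla u|=0$ unless $M$ is compact; if $M$ is compact, $u$ is constant by the maximum principle. The main obstacle I expect is exactly this step of extracting $\Ric(\nabla u,\cdot)=0$ from $H\equiv0$. First I would try differentiating $\langle\nabla f,\nabla u\rangle\equiv0$ in the direction $\nabla u$, combined with $\mathcal L_{\nabla f}J=0$.
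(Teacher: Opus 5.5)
Your Step 2 follows the paper: Lemma~\ref{lem-a} applied to $H$ with cutoffs $\chi(\Theta(d(o,x))/k)$, then the linear-volume-growth contradiction of Lemmas~\ref{lem-d}--\ref{lem-e}. One small correction there: $V(R)\ge cR$ does not give $\Vol(\partial B_t)\ge c$ pointwise. A Stieltjes integration by parts in $t$, using that $1/\Psi$ is nonincreasing, still gives $\int_M\Psi(d(o,x))^{-1}\dd v=\infty$.

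\textbf{The gap is in Step 3.} The identity $\Ric(\nabla u,\cdot)=0$ is not a consequence of $H\equiv0$, and no local differentiation can produce it. Differentiating $\langle\nabla f,\nabla u\rangle=0$ along $X$ and using $\nabla^2f=-\Ric$ gives $\Ric(X,\nabla u)=\nabla^2u(X,\nabla f)$. With $X=\nabla u$ this becomes $\Ric(\nabla u,\nabla u)=\frac12\langle\nabla f,\nabla|\nabla u|^2\rangle$, which need not vanish.

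Here is a local example. On (cigar)$\times\bbC_w$ take $f=f_\Sigma+\ell$, with $\ell$ a real linear function of $w$; this is still a steady gradient K\"ahler--Ricci soliton. Then $(\nabla f)^{1,0}=\kappa z\partial_z+c\,\partial_w$ with $\kappa,c$ real and nonzero. On $\{z\neq0\}$ the function $u=\Re w-(c/\kappa)\log|z|$ is pluriharmonic, and
\[
H=\tfrac12(\nabla f)^{1,0}\bigl(w-(c/\kappa)\log z\bigr)=0 .
\]
Yet $\Ric(\nabla u,\nabla u)=K(c/\kappa)^2|\nabla\log|z||^2>0$, because the cigar has Gauss curvature $K>0$.

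So Bochner plus the refined Kato inequality only give
\[
\tfrac12\Delta|\nabla u|^2\ge 2|\nabla|\nabla u||^2+\tfrac12\langle\nabla f,\nabla|\nabla u|^2\rangle .
\]
This makes $|\nabla u|^p$ subharmonic for the drift Laplacian $\Delta-\langle\nabla f,\nabla\,\cdot\,\rangle$, not for $\Delta$. Your plan to repeat the Step 2 cutoff argument with $w=|\nabla u|^{p/2}$ therefore does not apply as stated.

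\textbf{The missing idea} is to keep the drift term and integrate it by parts against the cutoff, using the traced soliton equation $\Delta f=-R$. With $q=|\nabla u|^2+\varepsilon$,
\[
-\frac1p\int_M\eta^2\langle\nabla f,\nabla q^{p/2}\rangle\dd v=\frac1p\int_M q^{p/2}\langle\nabla(\eta^2),\nabla f\rangle\dd v-\frac1p\int_M R\,\eta^2q^{p/2}\dd v .
\]
Since $R\ge0$, the last term has a favorable sign and moves to the left-hand side. Since $|\nabla f|\le C$, the first term is bounded by $\frac{C}{k}\int_M|\nabla u|^p\Psi(d(o,x))^{-1}\dd v\to0$ for your cutoffs $\eta_k$.

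This is exactly Lemma~\ref{lem-f}. It yields $\int_M|\nabla u|^{p-2}|\nabla|\nabla u||^2\dd v=0$, so $|\nabla u|$ is constant, and your volume argument then finishes the proof. Your closing suggestion, differentiating $F=0$ along $\nabla u$, is the right computation. What it produces is this drift identity, not the vanishing of $\Ric(\nabla u,\cdot)$.
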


Taking \(\Psi \equiv 1\), we obtain the following corollary, which provides a completely affirmative answer to the problem we posed above in the steady case.

\begin{corollary}\label{cor-a}
	Let \((M,g,J,f)\) be a connected complete gradient steady K\"ahler-Ricci soliton and let \(u\) be a pluriharmonic function on \(M\). If
	\[
	    \int_M|\nabla u|^p\dd v<\infty 
	\]
	for some \(0<p<\infty \), then \(u\) is constant.
\end{corollary}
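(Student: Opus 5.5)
Since Corollary \ref{cor-a} is Theorem \ref{thm-a} with \(\Psi\equiv 1\), the plan is to prove Theorem \ref{thm-a}. The argument has three steps, in this order.

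\emph{Step 1: \(|\nabla u|\) is constant.} The \((1,0)\)-form \(\partial u\) is holomorphic. Writing \(\partial u=\sum_i u_i\,dz^i\), a Kähler Bochner computation gives
\[
    \Delta |\partial u|^2=|\nabla\partial u|^2+\Ric(\nabla u,\nabla u)\ \ge\ 0\quad\text{(suitably normalized)} .
\]
The same computation shows that \(\log|\partial u|\) is plurisubharmonic where \(\partial u\neq 0\), after correcting by the curvature term. In the steady case we have \(\Ric=-\nabla^2 f\), and \(H\) absorbs this term: the Ricci term \(\Ric(\nabla u,\cdot)\) equals \(-\nabla_{\nabla u}\nabla f\), which is related to \(\bar\partial\)-derivatives of \(H\). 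The target is a weighted subharmonicity: with \(\Delta_f=\Delta-\langle\nabla f,\nabla\,\cdot\,\rangle\), we want
\[
    \Delta_f |\nabla u|^q\ \ge\ 0\quad\text{for every } q>0 ,
\]
or an analogous statement for the unweighted Laplacian once \(H\) is used. This all-exponent subharmonicity is exactly the ingredient in Yau's holomorphic \(L^p\) theorem for \(p>0\). I would first try to show directly that \(|\partial u|^q\) is subharmonic for all \(q>0\), using that \(\log|\partial u|^2\) satisfies \(\Delta\log|\partial u|^2\ge -\Ric(e,e)\) with \(e=\nabla u/|\nabla u|\). I would then combine this with the holomorphic function \(H\) so that the negative curvature term is controlled.

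\emph{Step 2: integral estimate.} With a subharmonic quantity \(w=|\nabla u|^{p}\) in hand, I run Yau's cutoff argument, adapted to the weight \(\Psi\). Let \(\phi(x)=\eta\bigl(\int_0^{d(o,x)}dt/\Psi(t)\bigr)\), where \(\eta\) is a cutoff equal to \(1\) on \([0,R]\) and to \(0\) beyond \(2R\) in the new variable. Then \(|\nabla\phi|\le C/(R\,\Psi(d))\). Integrating \(w^{s}\Delta w^{\cdot}\) against \(\phi^2\), in the form \(\int\phi^2|\nabla w^{1/2}|^2\le C\int |\nabla\phi|^2 w\), gives
\[
    \int \phi^2|\nabla w^{1/2}|^2\le \frac{C}{R}\int_M\frac{w}{\Psi(d)}\;\frac{1}{R\,\Psi(d)} .
\]
This requires only \(\Psi\ge1\). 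Condition \eqref{eq-z} makes \(\phi\) exhaust \(M\), so letting \(R\to\infty\) shows that \(w\) is constant. Hence \(|\nabla u|\) is constant.

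\emph{Step 3: conclude.} If \(|\nabla u|=c>0\), then \eqref{eq-y} forces
\[
    \int_M \frac{\dd v}{\Psi(d(o,x))}<\infty .
\]
I would rule this out using the known fact that complete steady solitons have at least linear volume growth (Munteanu--Sesum-type), in the form \(\Vol B(o,r)\ge c\,r\). Combined with \eqref{eq-z} and the monotonicity of \(\Psi\), a layer-cake argument gives
\[
    \int_M\frac{\dd v}{\Psi(d)}\ \ge\ \int_0^\infty\frac{c\,\dd r}{\Psi(r)}=\infty .
\]
Compact \(M\) is trivial. Therefore \(c=0\) and \(u\) is constant.

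The main obstacle is Step 1: obtaining subharmonicity of \(|\nabla u|^q\) for small \(q\) despite the Ricci term, which has no sign. I expect this is precisely where \(H\) enters, possibly by working with \(|H|\) or with \(H\) itself (a holomorphic function, so \(|H|^q\) is subharmonic for all \(q>0\)). One would first show \(H\) is constant via the estimate of Step 2, noting \(|H|\le|\nabla f||\nabla u|\) and \(|\nabla f|^2\le C\) in the steady case. Then \(\langle\nabla f,\nabla u\rangle\) is constant, and feeding this back into the Bochner formula controls \(\Ric(\nabla u,\nabla u)=-\nabla^2f(\nabla u,\nabla u)\).
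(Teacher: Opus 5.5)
Your last paragraph has the right skeleton, and it is the paper's. Lemma~\ref{lem-a} applied to the holomorphic function \(H\), with \(|H|\le\frac12|\nabla f||\nabla u|\) and \(|\nabla f|^2\le C_0\), makes \(H\) constant. Then \(|\nabla u|\) is shown to be constant, and linear volume growth excludes \(|\nabla u|\equiv c>0\). (Constancy of \(F\) is indeed enough: the paper proves \(H=0\), but afterwards only \(\nabla F=0\) is used.) But Step~1, which you yourself flag as the obstacle, is never carried out. Step~2 also does not match what the soliton structure delivers: you list \(\Delta_f|\nabla u|^q\ge0\) as a target, but Step~2 is written only for \(\Delta\)-subharmonic \(w\). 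A direct subharmonicity of \(|\partial u|^q\) is out of reach because \(\Ric\) has no sign. Even after using \(H\), you only get subharmonicity for the drift Laplacian. So the proof that \(|\nabla u|\) is constant is missing.

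Differentiating \(F\equiv\mathrm{const}\) along \(\nabla u\) and using \(\Ric=-\nabla^2 f\) gives
\[
\Ric(\nabla u,\nabla u)=\nabla^2u(\nabla f,\nabla u)=\tfrac12\langle\nabla f,\nabla|\nabla u|^2\rangle ,
\]
so the Ricci term turns into a drift: \(\frac12\Delta|\nabla u|^2=|\nabla^2u|^2+\frac12\langle\nabla f,\nabla|\nabla u|^2\rangle\). Two further ingredients are then needed.

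\emph{Refined Kato.} For \(p\le1\) you need the refined Kato inequality \(|\nabla^2u|^2\ge2|\nabla|\nabla u||^2\) (Lemma~\ref{lem-c}). Ordinary Kato only yields \(\Delta_f|\nabla u|^q\ge q(q-1)|\nabla u|^{q-2}|\nabla|\nabla u||^2\). Your \(\log|\partial u|\) remark is an equivalent formulation, but the correct sign is \(\Delta\log|\nabla u|^2\ge+2\Ric(e,e)\). This becomes \(\Delta_f\log|\nabla u|^2\ge0\) only once \(F\) is constant.

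\emph{Handling the drift.} What you obtain is \(\Delta_f\)-subharmonicity, while the hypothesis is in \(\dd v\). Since \(f\) is in general unbounded (e.g.\ linear on flat \(\bbC^m\)), you cannot pass to \(e^{-f}\dd v\). The drift term must instead be integrated by parts:
\[
-\int_M\phi^2\langle\nabla f,\nabla|\nabla u|^p\rangle\dd v=\int_M|\nabla u|^p\langle\nabla(\phi^2),\nabla f\rangle\dd v+\int_M\phi^2|\nabla u|^p\Delta f\dd v .
\]
The last term is dropped using \(\Delta f=-R\le0\), which is the trace of the soliton equation together with \(R\ge0\). The first term is bounded by \(\frac{C}{R}\int_M|\nabla u|^p\Psi(d)^{-1}\dd v\to0\). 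Cauchy--Schwarz absorption does not work here, because \(\int_M\phi^2|\nabla f|^2|\nabla u|^p\dd v\) does not tend to zero; the sign of \(\Delta f\) is essential. This is the content of Lemma~\ref{lem-f}, and it is what must replace your Step~2 for \(w=|\nabla u|^p\).
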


The admissible weights in Theorem~\ref{thm-a} include \(\Psi(t)=1+t,\ \Psi(t)=(1+t)\log (e+t)\), and the usual finite products with iterated logarithms whenever the reciprocal integral in \eqref{eq-z} diverges. Thus \eqref{eq-y} can be strictly weaker than ordinary \(L^p\) integrability. The divergence condition \eqref{eq-z} is sharp in some sense. Specially, we have

\begin{remark}
	Consider the flat complex cylinder \(M=\bbC/(2\pi \iu \bbZ)\cong \bbR\times S^1\), take \(f\) constant, and let \(u([z])=\Re z\). This is a complete steady K\"ahler-Ricci soliton with a nonconstant pluriharmonic function satisfying \(|\nabla u|\equiv 1\). Its volume growth is linear, and 
	\[
	    \int_M\frac{|\nabla u|^p}{\Psi(d(o,x))}\dd v< \infty 
	\]
	whenever \(\int_0^\infty \frac{\dd t}{\Psi (t)}<\infty \). Thus the divergence in \eqref{eq-z} cannot be replaced by convergence in a theorem covering all complete steady solitons.
\end{remark}

We now turn to the shrinking case. We give an affirmative answer to the problem we posed above in the range \(0<p\le 2\). 

\begin{theorem}\label{thm-b}
	Let \((M,g,J,f)\) be a connected complete gradient shrinking K\"ahler-Ricci soliton and let \(u\) be a pluriharmonic function on \(M\). If 
	\[
	    \int_M|\nabla u|^p\dd v<\infty 
	\]
	for some \(0<p\le 2\), then \(u\) is a constant function.
\end{theorem}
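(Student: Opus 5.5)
Since \(u\) is pluriharmonic and real-valued, \(\partial u\) is a holomorphic \((1,0)\)-form, and by the soliton structure \((\nabla f)^{1,0}\) is a holomorphic vector field. The plan is to show first that \(H=\langle(\nabla f)^{1,0},(\nabla u)^{0,1}\rangle\), which is globally holomorphic, vanishes identically, and then to deduce that \(u\) is constant.

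For the first step we use the shrinking soliton facts. After normalizing \(\lambda=\tfrac12\), Cao--Zhou gives \(\tfrac14(r-c)^2\le f\le \tfrac14(r+c)^2\) with \(r=d(o,x)\), and \(|\nabla f|^2\le f\). Moreover the volume of geodesic balls grows at most like \(r^{n}\), where \(n\) is the real dimension. Then \(|H|\le \tfrac12|\nabla f||\nabla u|\le C(1+r)|\nabla u|\). We cannot apply Yau's \(L^q\) Liouville theorem for holomorphic functions directly, because \(H\) is only weighted-\(L^p\). Instead we use its proof: \(|H|^{q}\) is plurisubharmonic, hence subharmonic, for every \(q>0\). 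Take a cutoff \(\phi\) supported in \(B(2R)\) and test the inequality \(\Delta|H|^{q}\ge0\) against \(\phi^2|H|^{q}\); more precisely, use Yau's estimate for \(|H|^{q/2}\), which works for all \(q>0\) because \(\log|H|\) is plurisubharmonic. This gives
\[
\int_{B(R)}|\nabla |H|^{q/2}|^2\le \frac{C}{R^2}\int_{B(2R)\setminus B(R)}|H|^{q}.
\]
With \(q=p\), the right-hand side is bounded by \(CR^{p-2}\int_{B(2R)\setminus B(R)}|\nabla u|^p\). Since \(p\le 2\) and \(|\nabla u|^p\in L^1\), this tends to \(0\). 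So \(|H|\) is constant, and then \(H\) is constant because it is holomorphic. To see that this constant is \(0\), evaluate at a minimum point \(x_0\) of \(f\), which exists on a shrinker because \(f\) is proper, bounded below and grows quadratically. There \(\nabla f(x_0)=0\), so \(H(x_0)=0\).

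For the second step, \(H\equiv0\) means \(\langle\nabla f,\nabla u\rangle=0\) and \(\langle J\nabla f,\nabla u\rangle=0\); that is, \(u\) is invariant under the flows of \(\nabla f\) and \(J\nabla f\). To conclude, we use the dilation flow of \(\nabla f\). Set \(\Phi_t\) to be the flow of \(\nabla f/|\nabla f|^2\) on \(\{f>c\}\), which moves level sets of \(f\) to each other, and \(u\) is constant along its trajectories. Alternatively, use the identity \(\nabla^2 f(\nabla u)\): differentiate \(\langle\nabla f,\nabla u\rangle=0\) to get \(\nabla^2f(\nabla u,\cdot)+\nabla^2u(\nabla f,\cdot)=0\). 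Together with Bochner \(\Delta|\nabla u|^2=2|\nabla^2u|^2+2\Ric(\nabla u,\nabla u)\) and \(\Ric=\tfrac12 g-\nabla^2 f\), this yields the weighted identity
\[
\Delta_f|\nabla u|^2=2|\nabla^2u|^2+|\nabla u|^2 .
\]
Integrating against a cutoff with the weight \(e^{-f}\), and using the fact that the \(\Delta_f\)-cutoff error is controlled by \(|\nabla u|^2\) on annuli times \(e^{-f}\), which is integrable given \(L^p\) and the polynomial bounds (use \(|\nabla u|\) subharmonic-type mean value bounds to pass from \(L^p\) to pointwise polynomial growth), we get \(\int|\nabla u|^2e^{-f}\le 0\). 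Hence \(\nabla u=0\).

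The main obstacle is making the last step rigorous. The pointwise growth bound for \(|\nabla u|\) from \(L^p\) with \(p<1\) needs a mean value inequality for the subharmonic function \(|\partial u|^{p}\), which holds because \(\partial u\) is holomorphic. On a shrinker there is no Ricci lower bound to supply such an inequality, so I would first try to use local Sobolev constants on unit balls, which are controlled on shrinkers via \(|\Ric|\)-independent estimates in terms of \(f\). Failing that, I would argue more directly with the invariance: \(u\) is constant along integral curves of \(-\nabla f\), and these all accumulate in the compact critical set of \(f\). Under the gradient flow of \(-f\), \(|\nabla u|\) at \(\Phi_t(x)\) relates to \(d\Phi_t\), and continuity of \(u\) then forces \(u\) to equal its values on the connected minimum set, whence \(u\) is constant.
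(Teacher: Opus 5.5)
\textbf{Step one.} Your first step is correct and is essentially the paper's. The Caccioppoli estimate for \(|H|^{p/2}\) (Lemma~\ref{lem-a}), together with \(|H|\le C(1+r)|\nabla u|\) and \(H(x_0)=0\) at a minimum point of \(f\), gives \(H\equiv0\). Your annular cutoff, with the bound \(CR^{p-2}\int_{B(2R)\setminus B(R)}|\nabla u|^p\to0\), works just as well as the paper's cutoff \(\eta_R=(1-A_\omega(\rho)/A_\omega(R))^+\).

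\textbf{Step two has a genuine gap.} You flag this step yourself, and none of your routes closes it for \(0<p<2\).

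The identity \(\Delta_f|\nabla u|^2=2|\nabla^2u|^2+|\nabla u|^2\) is right. But testing it against \(\phi^2e^{-f}\) leaves the error term \(4\int|\nabla\phi|^2|\nabla u|^2e^{-f}\), which you control only when \(p=2\).

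To get pointwise growth from \(L^p\), you claim that \(|\partial u|^p\) is subharmonic because \(\partial u\) is holomorphic. That holds for holomorphic functions but not for holomorphic \((1,0)\)-forms. Their Bochner formula carries a Ricci term, and on a shrinker \(\Ric=\lambda g-\nabla^2f\) is not known to be bounded below. So no mean value inequality is available this way; with \(F=0\) you only get a drift-Laplacian inequality.

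The flow argument is also incomplete. Trajectories of \(-\nabla f\) accumulate on the critical set of \(f\). You give no argument that they all end on the minimum set, that this set is connected, or that \(u\) is constant on it. The level-set flow of \(\nabla f/|\nabla f|^2\) only shows that \(u\) is determined by its values on one level set.

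\textbf{The missing idea.} It is much simpler and uses no integrability at all. The condition \(\langle\nabla f,\nabla u\rangle=0\) is exactly a homogeneous Neumann condition for \(u\) on every regular level set of \(f\). Since \(f\) is proper and bounded below, Sard's theorem gives regular values \(T_j\to\infty\) such that \(\Omega_j=\{f<T_j\}\) is relatively compact, with smooth boundary and outer normal \(\nu=\nabla f/|\nabla f|\). Green's identity for the harmonic function \(u\) then gives
\[
\int_{\Omega_j}|\nabla u|^2\,\dd v=\int_{\partial\Omega_j}u\,\partial_\nu u\,\dd v_{\partial\Omega_j}-\int_{\Omega_j}u\,\Delta u\,\dd v=0 ,
\]
so \(\nabla u\equiv0\). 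This is Lemma~\ref{lem-g}.

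If you prefer to keep your weighted Bochner route, it can be repaired. Test against \(\phi^2(|\nabla u|^2+\varepsilon)^{(p-2)/2}e^{-f}\) and use the refined Kato inequality of Lemma~\ref{lem-c}, with the same algebra as in Lemma~\ref{lem-f}. The cutoff error then becomes \(CR^{-2}\int_M|\nabla u|^pe^{-f}\,\dd v\), and no pointwise bound is needed.
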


When \(p>2\), we are currently unable to directly provide a positive answer to the problem we posed above. However, we can present a Liouville theorem under a weighted integral condition, which also encompasses Theorem~\ref{thm-b}. Let \(f_0=f-\min\limits_Mf\ge 0\) and define the smooth function
\[
	\rho(x)=\left(1+\frac{2f_0(x)}{\lambda }\right)^{1/2}.
\]

\begin{theorem}\label{thm-c}
	Let \((M,g,J,f)\) be a connected complete gradient shrinking K\"ahler-Ricci soliton and let \(u\) be a pluriharmonic function on \(M\). Suppose that
    \begin{equation}\label{eq-x}
    	\int_M\omega(\rho )|\nabla u|^p\dd v<\infty 
    \end{equation}
    for some \(0<p<\infty \), where \(\omega :[1,\infty )\to (0,\infty )\) is a continuous positive function satisfying 
	\begin{equation}\label{eq-w}
		\int_1^\infty \frac{\sqrt{\omega (t)}}{t^{p/2}}\dd t=\infty .
	\end{equation}
	Then \(u\) is a constant function.
\end{theorem}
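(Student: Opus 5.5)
Write $H=\langle(\nabla f)^{1,0},(\nabla u)^{0,1}\rangle$. The plan has three stages: show that $H$ is holomorphic, prove $H\equiv 0$ from the weighted integrability using Yau-type cut-off estimates for the plurisubharmonic function $|H|^{q}$ adapted to the level sets of $\rho$, and then conclude from $H\equiv0$ that $\nabla u\equiv 0$.

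\textbf{Holomorphy of $H$.} Since $\partial u$ is a holomorphic $(1,0)$-form and $(\nabla f)^{1,0}$ is a holomorphic vector field, their contraction $H=\partial u\big((\nabla f)^{1,0}\big)$ is a global holomorphic function. Hence $|H|^q$ is plurisubharmonic, in particular subharmonic, for every $q>0$. We also have the pointwise bound $|H|\le \tfrac12|\nabla f|\,|\nabla u|$. On a shrinker, the identity $R+|\nabla f|^2=2\lambda f$ (suitably normalized) together with $R\ge 0$ gives $|\nabla f|^2\le 2\lambda f_0+C\le C'\lambda^2\rho^2$. Thus $|H|\le C\rho|\nabla u|$.

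\textbf{$H\equiv 0$.} Take $q=p$, so $|H|^p\le C\rho^p|\nabla u|^p$. The weighted hypothesis gives $\int_M \omega(\rho)\rho^{-p}|H|^p<\infty$. We have $\Delta |H|^{p/2}\ge0$, and indeed $|H|^{p/2}=|H^{p/2}|$ is locally the modulus of a holomorphic function, which gives the Yau estimate for all $p>0$. Set $w=|H|^{p/2}$, let $\phi=\phi(\rho)$ be a radial cut-off in $\rho$, and integrate $\phi^2 w\Delta w\ge 0$ by parts:
\[
\int \phi^2|\nabla w|^2\le 4\int w^2|\nabla\phi|^2 .
\]
Since $|\nabla\rho|=|\nabla f|/(\lambda\rho)\le C$, we get $|\nabla\phi|\le C|\phi'(\rho)|$. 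The key is to pick $\phi'$ optimally against the weight. Split $w^2|\phi'|^2=\big(\omega\rho^{-p}w^2\big)\cdot\big(\rho^p|\phi'|^2/\omega\big)$. Choose
\[
\phi(\rho)=\frac{\int_\rho^{T}\sqrt{\omega(t)}\,t^{-p/2}\,dt}{\int_{r_0}^{T}\sqrt{\omega(t)}\,t^{-p/2}\,dt}
\]
between levels $r_0$ and $T$. For this choice $\rho^p|\phi'|^2/\omega=\big(\int_{r_0}^T\sqrt{\omega}\,t^{-p/2}\big)^{-2}$, which tends to $0$ as $T\to\infty$ by \eqref{eq-w}. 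It follows that $\int_{\{\rho<r_0\}}|\nabla w|^2=0$ for every $r_0$, so $w$ is constant.

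If that constant were nonzero, finiteness of $\int\omega\rho^{-p}|H|^p$ would force $\int_M\omega(\rho)\rho^{-p}<\infty$. This should contradict \eqref{eq-w} via the shrinker volume growth: on a shrinker the sets $\{\rho\le t\}$ have volume at least $ct$, by the Cao--Zhou lower bound of linear growth, and the coarea formula with $|\nabla\rho|\le C$ gives $\int_{\{\rho\le t\}}\omega\rho^{-p}\gtrsim\int^t \omega(s)s^{-p}\,d\mathrm{Vol}$-type quantities. Comparing these with $\int\sqrt\omega\, t^{-p/2}$ by Cauchy--Schwarz is where I expect the main obstacle to lie. An alternative is to note directly that a nonzero constant $|H|$ makes $H$ itself constant. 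Then $\langle\nabla f,\nabla u\rangle$ is a nonzero constant, and the flow of $\nabla f$ on a shrinker, which has a critical point (the minimum of $f$), gives a contradiction at that point since $H=0$ where $\nabla f=0$. This second argument is cleaner, and I would use it. So $H\equiv 0$.

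\textbf{From $H\equiv0$ to $u$ constant.} $H\equiv0$ gives $\langle\nabla f,\nabla u\rangle=\langle J\nabla f,\nabla u\rangle=0$. Differentiating $\langle\nabla f,\nabla u\rangle=0$ and using pluriharmonicity together with the soliton equation, as in Luo's argument with $F=2\Re H$, gives
\[
\nabla^2u(\nabla f,\cdot)+\nabla^2f(\nabla u,\cdot)=0,
\]
that is, $\tfrac12\nabla_{\nabla f}\nabla u$ is controlled by $\lambda\nabla u-\Ric(\nabla u)$. I would then pass to $\mathcal L_{\nabla f}du=d\langle\nabla f,\nabla u\rangle=0$, so $du$ is invariant under the flow of $\nabla f$. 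Along backward flow lines everything converges to the compact critical set where $f$ is minimal, and $|\nabla u|^2$ transforms by the flow. Combined with $\int\omega(\rho)|\nabla u|^p<\infty$ and the fact that the flow is volume-expanding in the direction of increasing $f$ (since $\Delta f=n\lambda-R$), invariance forces $\nabla u=0$ near the minimum, and then everywhere by analyticity of $\partial u$. This final step is where care is needed. If the flow argument proves unwieldy, I would fall back on Luo's identity $\Delta F=0$ type computation, which gives $\Delta_f u=\lambda$-multiples that vanish.
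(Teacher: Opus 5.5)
Your first two stages are correct and essentially reproduce the paper's argument. Your cutoff built from $\int\sqrt{\omega}\,t^{-p/2}\,\dd t$ is the paper's $\eta_R=(1-A_\omega(\rho)/A_\omega(R))^+$, and your Caccioppoli estimate for $|H|^{p/2}$ is Lemma~\ref{lem-a}. One caveat there: $|H|^{p/2}=|H^{p/2}|$ holds only away from the zero set of $H$, so for non-integer $p/2$ you must regularize, e.g.\ with $(|H|^2+\varepsilon)^{p/4}$, before integrating by parts. Evaluating the constant $H$ at a minimum point of $f$ then gives $H\equiv0$, exactly as in Proposition~\ref{prop-a}.

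The genuine gap is the last stage, deducing $\nabla u\equiv0$ from $H\equiv0$. The flow argument does not work as stated, for three reasons:
\begin{enumerate}
\item Backward flow lines of $\nabla f$ accumulate on the critical set of $f$, and this set can contain components that are not minima. For example, take the product of a compact non-Einstein shrinking K\"ahler--Ricci soliton $(N,f_N)$ with the Gaussian soliton on $\bbC$; the point $(x_{\max},0)$, with $x_{\max}$ a maximum of $f_N$, is a critical point of $f$ that is not a minimum.
\item ``Volume-expanding'' would need $\Delta f=n\lambda-R>0$, which is not available; on a compact shrinker $\int_M\Delta f\,\dd v=0$.
\item The decisive claim that invariance of $du$ ``forces $\nabla u=0$ near the minimum'' is asserted, not proved. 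The identity $\phi_t^*du=du$ gives no control of $|\nabla u|$ without controlling $\phi_t^*g$, and you give no mechanism by which the weighted integrability would enter.
\end{enumerate}
The fallback is empty too: once $F=0$, $\Delta_fu=\Delta u-F=0$ holds trivially and says nothing.

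The missing idea is elementary and needs no integrability at all. Since $f$ is proper and bounded below (Lemma~\ref{lem-b}), for regular values $T_j\to\infty$ the sets $\Omega_j=\{f<T_j\}$ are relatively compact with smooth boundary and outward normal $\nu=\nabla f/|\nabla f|$. On $\partial\Omega_j$ you have $\partial_\nu u=F/|\nabla f|=0$, so Green's identity gives $\int_{\Omega_j}|\nabla u|^2\,\dd v=\int_{\partial\Omega_j}u\,\partial_\nu u-\int_{\Omega_j}u\,\Delta u\,\dd v=0$. Exhausting $M$ yields $\nabla u\equiv0$; this is Lemma~\ref{lem-g}.

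Your own observation that $u$ is constant along flow lines of $\nabla f$ can also be completed cheaply. Take $T$ a regular value. The maximum of $u$ on the compact set $\{f\le T\}$ can be moved into $\{f<T\}$ by flowing backward for a short time, since $\nabla f\neq0$ on $\{f=T\}$. The strong maximum principle then makes $u$ constant on an open set, hence on $M$ by analyticity.
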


\begin{remark}
	The condition \eqref{eq-w} is precisely the condition arising from the construction of the cutoff functions used in the proof of Theorem~\ref{thm-c}. It guarantees the existence of a sequence of cutoff functions \(\{\eta_R\}\) such that
    \[
        \int_M |H|^p|\nabla\eta_R|^2\,dv\to0 .
    \]
\end{remark}

\begin{remark}
    Let \(\omega \equiv 1\). When \(0<p\le 2\), the integral in \eqref{eq-w} becomes \(\int_1^\infty \frac{1}{t^{p/2}}\dd t\), which is automatically divergent, so Theorem~\ref{thm-c} indeed contains Theorem~\ref{thm-b}. However, when \(p>2\), the integral converges, and at present, our method cannot deduce that \(u\) is constant. Therefore, in the shrinking case, we can currently answer the problem we posed above only for \(0<p\le 2\).
\end{remark}

The remainder of this paper is organized as follows. In Section~\ref{sec-2}, we introduce some notation and establish several auxiliary lemmas that will be used in Sections~\ref{sec-3} and \ref{sec-4}. In Sections~\ref{sec-3} and \ref{sec-4}, we prove the Liouville theorems in the steady and shrinking cases, respectively. In Section~\ref{sec-5}, we provide an example showing that the soliton equation is essential for our Liouville theorems to extend to the range \(0<p<1\). 

\section{Preliminaries and auxiliary lemmas}\label{sec-2}

We first fix some notation. We use local holomorphic coordinates \(z=(z^1,\ldots,z^m)\) on a K\"ahler manifold, and Greek indices refer to the \((1,0)\)-components. The Einstein summation convention is understood. All subscripts denote covariant derivatives unless explicitly stated as ordinary partial derivatives.

For a real-valued smooth function \(v\), we write
\[
    v_\alpha=\nabla_\alpha v,\qquad v_{\bar\alpha}=\nabla_{\bar\alpha}v .
\]
For scalar functions, these first-order covariant derivatives agree with the ordinary partial derivatives. The underlying Riemannian metric is expressed in holomorphic coordinates as
\[
    g=2\Re\left(g_{\alpha \bar\beta }\,dz^\alpha \otimes d\bar z^\beta \right).
\]
Accordingly, we have
\[
    \nabla v=g^{\alpha \bar\beta }\left(v_{\bar\beta }\partial_\alpha +v_\alpha\partial_{\bar\beta}\right)=2\Re\left(g^{\alpha \bar\beta }v_{\bar\beta }\partial_\alpha \right),
\]
and
\[
    \Delta v=g^{\alpha \bar\beta }v_{\alpha \bar\beta }+g^{\beta \bar\alpha }v_{\bar\alpha \beta }=2g^{\alpha \bar\beta }v_{\alpha \bar\beta }.
\]
For two real-valued smooth functions \(v,w\),
\[
    \langle \nabla v,\nabla w\rangle =g^{\alpha \bar\beta }v_\alpha w_{\bar\beta }+g^{\alpha \bar\beta }v_{\bar\beta }w_\alpha =2\Re \left(g^{\alpha \bar\beta }v_\alpha w_{\bar\beta }\right).
\]

For later use, we recall the following standard identities for gradient Ricci solitons
\begin{equation}\label{eq-az}
    R+|\nabla f|^2-2\lambda f=C_0,
\end{equation}
where \(C_0\) is a constant (see \cite{MR2648937,MR2274812,MR2448435}). Moreover, the scalar curvature of complete gradient steady and shrinking Ricci solitons is nonnegative, that is, \(R\ge 0\) (see \cite{MR2520796}).

The following Caccioppoli-type estimate is a localized form of Yau’s classical argument for the \(L^p\) Liouville theorem for holomorphic functions \cite{MR417452}. For a related regularized cutoff calculation on complete Gauduchon manifolds, compare \cite{MR3951749}.
\begin{lemma}\label{lem-a}
	Let \(h\) be holomorphic on a K\"ahler manifold \(M\) and let \(0<p<\infty \). Then for every compactly supported locally Lipschitz function \(\eta \), we have
	\begin{equation}\label{eq-a}
		\int_M\eta^2\left|\nabla(|h|^{p/2})\right|^2\dd v\le \int_M|h|^p|\nabla \eta |^2\dd v.
	\end{equation}
	The left-hand side is understood by regularization at the zero set. In particular, if \(M\) is connected, \(\eta_j\to 1\) locally, and the right-hand side tends to zero, then \(h\) is constant.
\end{lemma}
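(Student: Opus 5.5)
The plan is to derive \eqref{eq-a} from a subharmonicity-type identity for \(|h|^{p}\), regularized near the zero set of \(h\), and then integrate by parts against \(\eta^2\). Fix \(\epsilon>0\) and set \(\phi_\epsilon=(|h|^2+\epsilon)^{p/4}\). Since \(h\) is holomorphic, \(|h|^2+\epsilon\) is smooth and positive, and a direct computation in local holomorphic coordinates gives
\[
\partial_\alpha\partial_{\bar\beta}\log(|h|^2+\epsilon)=\frac{\epsilon\, h_\alpha\overline{h_\beta}}{(|h|^2+\epsilon)^2},
\]
which is a nonnegative Hermitian form. Taking the trace with \(g^{\alpha\bar\beta}\) and using \(\Delta=2g^{\alpha\bar\beta}\partial_\alpha\partial_{\bar\beta}\) on functions shows \(\Delta\log(|h|^2+\epsilon)\ge 0\).

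Writing \(\phi_\epsilon=e^{(p/4)\psi}\) with \(\psi=\log(|h|^2+\epsilon)\), we get
\[
\Delta\phi_\epsilon=\phi_\epsilon\Bigl(\tfrac p4\Delta\psi+\tfrac{p^2}{16}|\nabla\psi|^2\Bigr)\ge \frac{|\nabla\phi_\epsilon|^2}{\phi_\epsilon}.
\]
Equivalently, \(\phi_\epsilon\Delta\phi_\epsilon\ge|\nabla\phi_\epsilon|^2\) pointwise; this is the key pointwise inequality, and it holds for every \(p>0\). Next multiply by \(\eta^2\) and integrate by parts, which is legitimate since \(\eta\) is Lipschitz with compact support:
\[
\int\eta^2|\nabla\phi_\epsilon|^2\le\int\eta^2\phi_\epsilon\Delta\phi_\epsilon=-\int\eta^2|\nabla\phi_\epsilon|^2-2\int\eta\phi_\epsilon\langle\nabla\eta,\nabla\phi_\epsilon\rangle .
\]
Hence \(2\int\eta^2|\nabla\phi_\epsilon|^2\le 2\int|\eta||\nabla\phi_\epsilon|\,\phi_\epsilon|\nabla\eta|\). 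Applying Cauchy–Schwarz and absorbing the \(\eta^2|\nabla\phi_\epsilon|^2\) term gives
\[
\int\eta^2|\nabla\phi_\epsilon|^2\le\int\phi_\epsilon^2|\nabla\eta|^2=\int(|h|^2+\epsilon)^{p/2}|\nabla\eta|^2 .
\]
The constant comes out to exactly \(1\) because of the factor \(2\) produced by the integration by parts.

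Then let \(\epsilon\to0\). The right-hand side converges to \(\int|h|^p|\nabla\eta|^2\) by dominated convergence on the compact support. On the left, off the zero set \(\nabla\phi_\epsilon\to\nabla(|h|^{p/2})\) pointwise, so Fatou's lemma gives \eqref{eq-a}, with the left-hand side interpreted as the integral over \(\{h\neq0\}\) (or as the \(\liminf\) of the regularizations). This passage is the main delicate point, but the regularized statement already suffices for everything that follows. When \(h\not\equiv0\), the zero set of \(h\) has measure zero on a connected manifold, so no information is lost.

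For the final assertion, suppose \(\eta_j\to1\) locally and \(\int|h|^p|\nabla\eta_j|^2\to0\). Since \(\eta_j=1\) on any fixed compact set \(K\) for large \(j\), Fatou's lemma gives \(\nabla(|h|^{p/2})=0\) on \(\{h\ne0\}\cap K\). Thus \(|h|\) is locally constant on the open set \(\{h\neq0\}\). A holomorphic function of constant modulus on a connected open set is constant, by the open mapping theorem or by \(\partial\bar\partial|h|^2=|\partial h|^2\)-type reasoning. So \(h\) is constant on each component of \(\{h\ne0\}\). If \(h\not\equiv0\), continuity together with the identity theorem on connected \(M\) forces \(h\) to be a nonzero constant everywhere; otherwise \(h\equiv0\). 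In either case \(h\) is constant.
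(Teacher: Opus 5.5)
Your proof is correct and is essentially the paper's argument. Your inequality \(\phi_\epsilon\Delta\phi_\epsilon\ge|\nabla\phi_\epsilon|^2\) is equivalent (via \(\Delta\phi^2=2\phi\Delta\phi+2|\nabla\phi|^2\)) to the paper's \(\Delta Q^{p/2}\ge 4|\nabla(Q^{p/4})|^2\) with \(Q=|h|^2+\epsilon\), and your cutoff and Cauchy--Schwarz step is the same as the paper's. The only variation is at \(\epsilon\downarrow0\): you use Fatou on \(\{h\ne0\}\) plus the identity theorem, whereas the paper uses a uniform \(W^{1,2}_{\mathrm{loc}}\) bound and weak lower semicontinuity, which also controls the weak gradient of \(|h|^{p/2}\) across the zero set.

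One small correction: read ``\(\eta_j\to1\) locally'' as locally uniform convergence, since the shrinking-case cutoffs \((1-A_\omega(\rho)/A_\omega(R))^+\) are \(<1\) wherever \(\rho>1\). So use \(\eta_j\ge\frac12\) on \(K\) for large \(j\) instead of \(\eta_j=1\); this costs only a factor \(4\).
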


\begin{proof}
	Fix \(0<\varepsilon \le 1\) and set \(Q=|h|^2+\varepsilon \). Since \(h\) is holomorphic,
	\[
	    |\nabla Q|^2=2|h|^2|\partial h|^2.
	\]
	The chain rule gives
    \begin{equation}\label{eq-b}
    	\begin{split}
    		\Delta Q^{p/2}=&\frac{p}{2}Q^{(p-2)/2}\Delta Q+\frac{p}{2}\left(\frac{p}{2}-1\right)Q^{(p-4)/2}|\nabla Q|^2\\
    		=&pQ^{(p-4)/2}\left(\varepsilon +\frac{p}{2}|h|^2\right)|\partial h|^2.
    	\end{split}
    \end{equation}
    On the other hand, 
    \[
        \left|\nabla (Q^{p/4})\right|^2=\frac{p^2}{16}Q^{(p-4)/2}|\nabla Q|^2=\frac{p^2}{8}Q^{(p-4)/2}|h|^2|\partial h|^2.
    \]
    Combining \eqref{eq-b}, this gives 
    \begin{equation}\label{eq-c}
    	\Delta Q^{p/2}\ge 4\left|\nabla (Q^{p/4})\right |^2.
    \end{equation}
    Multiplying \eqref{eq-c} by \(\eta^2\) and integrating by parts, we obtain
    \begin{align*}
    	4\int_M\eta^2\left|\nabla (Q^{p/4})\right|^2\dd v
    	\le &-\int_M\left \langle \nabla (\eta^2),\nabla (Q^{p/2})\right \rangle \dd v\\
    	\le &4\int_M|\eta |Q^{p/4}|\nabla \eta |\left|\nabla (Q^{p/4})\right|\dd v\\
    	\le &4\left(\int_M\eta^2\left|\nabla (Q^{p/4})\right|^2\dd v\right)^{\frac{1}{2}}\left(\int_MQ^{p/2}|\nabla \eta |^2\dd v\right)^{\frac{1}{2}},
    \end{align*}
    where the last inequality comes from the Cauchy--Schwarz inequality. Hence we get
    \begin{equation}\label{eq-d}
    	\int_M\eta^2\left|\nabla (Q^{p/4})\right|^2\dd v\le \int_MQ^{p/2}|\nabla \eta |^2\dd v.
    \end{equation}

    Now choose a relatively compact open set \(U\Subset M\) such that \(\supp \eta \Subset U\), and choose \(\xi \in C^\infty_c(M)\) satisfying
    \[
        0\le \xi \le 1,\qquad \xi |_U\equiv 1.
    \]
    Applying \eqref{eq-d} and replacing \(\eta \) with \(\xi \), we obtain
    \[
        \int_U\left|\nabla (Q^{p/4})\right|^2\dd v\le \int_MQ^{p/2}|\nabla \xi |^2\dd v\le C,
    \]
    where \(C\) is independent of \(\varepsilon \). Moreover,
    \[
        \int_U\left|Q^{p/4}\right|^2\dd v=\int_UQ^{p/2}\dd v\le C.
    \]
    Thus \(Q^{p/4}\) is uniformly bounded in \(W^{1,2}(U)\). Since \(Q^{p/4}\ge |h|^{p/2}\), we have
    \[
        \left|Q^{p/4}-|h|^{p/2}\right|^2\le Q^{p/2}-|h|^p.
    \]
    The right-hand side converges to \(0\) in \(L^1(U)\) by dominated convergence. It follows that \(Q^{p/4}\longrightarrow |h|^{p/2}\) in \(L^2(U)\). By the uniform \(W^{1,2}(U)\) bound, after passing to a subsequence, \(Q^{p/4}\) converges weakly to \(|h|^{p/2}\) in \(W^{1,2}(U)\), where the weak limit is identified using the above strong \(L^2(U)\) convergence. Therefore, weak lower semicontinuity gives
    \begin{equation}\label{eq-e}
    	\int_M\eta^2\left|\nabla \left(|h|^{p/2}\right)\right|^2\dd v
    	\le \liminf_{\varepsilon \downarrow 0}\int_M\eta^2\left|\nabla (Q^{p/4})\right|^2\dd v.
    \end{equation}
    By dominated convergence, the right-hand side of \eqref{eq-d} convergences to \(\int_M|h|^p|\nabla \eta |^2\dd v\). Together with \eqref{eq-e}, yields \eqref{eq-a}.

    Now suppose that \(\eta_j\to 1\) locally uniformly and that the right-hand side of \eqref{eq-a} tends to zero. Fix a compact set \(K\subset M\). For all sufficiently large \(j\), we have \(\eta_j\ge 1/2\) on \(K\). Hence, by \eqref{eq-a},
    \[
        \frac14\int_K\left|\nabla \left(|h|^{p/2}\right)\right|^2\,\dd v 
        \le \int_M\eta_j^2\left|\nabla \left(|h|^{p/2}\right)\right|^2\,\dd v 
        \le \int_M|h|^p|\nabla\eta_j|^2\,\dd v.
    \]
    Letting \(j\to\infty\), we obtain
    \[
        \int_K\left|\nabla \left(|h|^{p/2}\right)\right|^2\,\dd v=0.
    \]
    Since \(K\) is arbitrary, \(|h|^{p/2}\) has zero weak gradient on \(M\). As \(M\) is connected and \(|h|\) is continuous, \(|h|\) is constant. Since \(h\) is holomorphic and has constant modulus, the open mapping theorem implies that \(h\) is constant.
\end{proof}

The following properties follow from the quadratic growth estimate of the potential function due to \cite{MR2732975} or \cite{MR2846384}.
\begin{lemma}\label{lem-b}
	Let \((M,g,f)\) be a complete noncompact gradient shrinking Ricci soliton. The potential is bounded below, proper, and attains its minimum. Moreover, for every fixed minimum point \(o\) of \(f\), there is \(C\ge 1\) such that
	\[
	    C^{-1}(1+d(o,x))\le \rho(x)\le C(1+d(o,x)),\qquad |\nabla \rho |(x)\le C,\qquad |\nabla f|(x)\le C\rho (x).
	\]
	In particular, all sublevel sets of \(f\) and \(\rho \) are compact.
\end{lemma}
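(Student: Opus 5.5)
The plan is to derive everything from the quadratic growth estimate for the potential of a complete noncompact shrinking soliton, due to Cao--Zhou \cite{MR2732975} and Haslhofer--M\"uller \cite{MR2846384}. After adding a constant to \(f\), that estimate says there are constants \(c_1,c_2>0\) with
\[
    \frac{\lambda}{2}\bigl(d(o,x)-c_1\bigr)_+^2\le f(x)\le \frac{\lambda}{2}\bigl(d(o,x)+c_2\bigr)^2
\]
for every \(x\in M\); the estimate is scaled to the soliton equation \(\Ric+\nabla^2f=\lambda g\). Here \(o\) is a reference point, and we may take it to be a minimum point of \(f\).

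\textbf{Step 1: \(f\) is bounded below, proper, and attains its minimum.} The lower bound shows that \(f\to\infty\) at infinity, so \(f\) is bounded below and proper. Since \(M\) is complete, closed bounded sets are compact, so sublevel sets of \(f\) are compact and \(f\) attains its minimum. Now fix any minimum point \(o\). The two-sided quadratic bound still holds with base point \(o\), with new constants: changing the base point shifts \(d\) by at most \(d(o,o')\), and changing the normalization shifts \(f\) by a constant. Therefore \(f_0=f-\min f\) satisfies
\[
    \frac{\lambda}{2}\bigl(d(o,x)-c\bigr)_+^2\le f_0(x)\le \frac{\lambda}{2}\bigl(d(o,x)+c\bigr)^2 .
\]

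\textbf{Step 2: comparison of \(\rho\) with \(1+d(o,x)\).} Recall \(\rho^2=1+2f_0/\lambda\). The bounds in Step 1 give
\[
    1+\bigl(d-c\bigr)_+^2\le \rho^2\le 1+(d+c)^2 .
\]
From this, \(\rho\le 1+d+c\le C(1+d)\). For the lower bound, split into two cases. If \(d\ge 2c\), then \(\rho\ge d/2\), and also \(\rho\ge 1\), so \(\rho\ge \tfrac14(1+d)\cdot 2\). If \(d<2c\), then \(\rho\ge1\ge (1+d)/(1+2c)\).

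\textbf{Step 3: gradient bounds.} By \eqref{eq-az} and \(R\ge0\) (for shrinkers, \cite{MR2520796}),
\[
    |\nabla f|^2\le C_0+2\lambda f=C_0'+2\lambda f_0=C_0'+\lambda^2(\rho^2-1).
\]
Since \(\rho\ge1\), this gives \(|\nabla f|\le C\rho\). Differentiating \(\rho^2=1+2f_0/\lambda\) gives \(2\rho\nabla\rho=2\nabla f/\lambda\), so \(|\nabla\rho|=|\nabla f|/(\lambda\rho)\). The same identity gives the sharper bound
\[
    |\nabla f|^2\le C_0'-\lambda^2+\lambda^2\rho^2 ,
\]
which yields \(|\nabla \rho|\le \sqrt{1+C/\rho^2}\le C\). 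Compactness of the sublevel sets of \(\rho\) follows from properness of \(f\).

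\textbf{Main obstacle.} The only nontrivial input is the cited quadratic growth estimate, in particular its lower bound. That bound is what gives properness and \(\rho\gtrsim 1+d\); it comes from second-variation arguments along minimizing geodesics using \(\Ric+\nabla^2 f=\lambda g\). Everything else is elementary bookkeeping of constants. One must also check that \(C_0'\) does not cause trouble, i.e., that the constant in \eqref{eq-az} is finite; this is automatic.
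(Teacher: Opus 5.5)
Your proposal is correct and follows the paper's proof essentially step for step. The quadratic growth bound of Cao--Zhou/Haslhofer--M\"uller gives properness and the minimum, a triangle-inequality change of base point to $o$ gives two-sided quadratic bounds on $f_0$, a case split gives $\rho\simeq 1+d(o,x)$, and the identity $R+|\nabla f|^2-2\lambda f=C_0$ with $R\ge 0$ yields $|\nabla f|\le C\rho$ and $|\nabla\rho|=|\nabla f|/(\lambda\rho)\le C$; the paper simply identifies your constant $C_0'$ as $R(o)$ by evaluating at the minimum point.

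The only slip is arithmetic, in Step 2: $\rho\ge\max\{1,d/2\}$ gives $\rho\ge\frac14(1+d)$, not $\frac12(1+d)$. This does not affect the conclusion.
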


\begin{proof}
	Under a scaling of the metric, and by invoking the quadratic growth estimate for the potential due to \cite{MR2732975} or \cite{MR2846384}, after adding a suitable constant to \(f\), there exist a base point \(q\) and \(a,A>0\) and \(b,B\ge 0\) such that 
	\begin{equation}\label{eq-f}
        ad(q,x)^2-b\le f(x)\le Ad(q,x)^2+B.
	\end{equation}
	It can be concluded that when \(a\to \infty \), \(f(x)\to \infty \). Therefore, \(f(x)\) has a lower bound and is a proper function. Since \(M\) is complete, by the Hopf--Rinow theorem, each sub-level set of \(f(x)\) is compact. Therefore, \(f(x)\) attains its minimum value at some point \(o\). By the triangle inequality, 
	\[
	    d(o,x)\le d(q,x)+d(o,q),\qquad d(q,x)\le d(o,x)+d(0,q).
	\]
	It follows that
	\[
	    \frac{1}{2}d(o,x)^2-d(o,q)^2\le d(q,x)^2\le 2d(o,x)^2+2d(o,q)^2
	\]
    Substitute this into \eqref{eq-f} and absorb the constants, we can obtain that there exist \(a_0,A_0>0\) and \(b_0,B_0\ge 0\) such that
    \begin{equation}\label{eq-g}
        a_0d(o,x)^2-b_0\le f_0(x)\le A_0d(o,x)^2+B_0.
    \end{equation}

    From \eqref{eq-g}, it can be concluded that \(\rho(x)\) and \(1+d(o,x)\) are comparable. In fact, we have
    \begin{equation}\label{eq-h}
    	\begin{split}
    		\rho^2
    		=&1+\frac{2f_0}{\lambda }\\
    		\le &1+\frac{2A_0}{\lambda }d(o,x)^2+\frac{2B_0}{\lambda }\\
    		\le &C_1(1+d(o,x))^2.
    	\end{split}
    \end{equation}
    Choose \(R_0\ge \max\left\{1,\sqrt{\frac{2b_0}{a_0}}\right\}\). When \(d(o,x)\ge R_0\), by \eqref{eq-g}, we have \(f_0\ge \frac{a_0}{2}d(o,x)^2\). Hence,
    \begin{equation}\label{eq-i}
    	\rho^2
    	\ge \frac{2f_0}{\lambda }
        \ge \frac{a_0}{\lambda }d(o,x)^2
        \ge \frac{a_0}{4\lambda }(1+d(o,x))^2.
    \end{equation}
    When \(d(0,x)<R_0\), we have 
    \begin{equation}\label{eq-j}
    	\rho^2\ge 1\ge \frac{1}{(1+R_0)^2}(1+d(o,x))^2.
    \end{equation}
    Combining \eqref{eq-h}, \eqref{eq-i} and \eqref{eq-j}, and adjusting the constants, we obtain
    \[
        C^{-1}(1+d(o,x))\le \rho(x)\le C(1+d(o,x)).
    \]

    We now proceed to derive the gradient estimates. After replacing \(f\) by the normalized potential \(f_0=f-f_{\min}\), the identity \eqref{eq-az} remains valid in the form
    \[
        R+|\nabla f|^2-2\lambda f_0=C_0,
    \]
    where we still denote the constant by \(C_0\). However, \(f_0(o)=0\) and \(\nabla f(o)=0\), substituting these values into the above equality yields \(C_0=R(o)\). Consequently, the above equality reduces to
    \begin{equation}\label{eq-k}
    	R+|\nabla f|^2-2\lambda f_0=R(o).
    \end{equation}
    Chen \cite{MR2520796} proved that \(R\ge 0\). By \eqref{eq-k} and the definition of \(\rho \), we have
    \begin{equation}\label{eq-l}
        \begin{split}
    	    |\nabla f|^2
    	    \le &2\lambda f_0+R(o)\\
    	    =&\lambda^2(\rho^2-1)+R(o)\\
    	    \le &\left(\lambda^2+R(o)\right)\rho^2,
    	\end{split}
    \end{equation}
    where the last inequality follows from \(\rho \ge 1\). Therefore, we arrive at
    \[
        |\nabla f|\le \sqrt{\lambda^2+R(o)}\rho \le C\rho .
    \]
    Taking the gradient of \(\rho^2=1+\frac{2f_0}{\lambda }\) yields
    \[
        \nabla \rho =\frac{\nabla f}{\lambda \rho }.
    \]
    combining \eqref{eq-l}, we obtain
    \begin{align*}
    	|\nabla \rho |^2
    	\le &\frac{2\lambda f_0+R(o)}{\lambda^2\rho^2}\\
    	=&\frac{\lambda^2(\rho^2-1)+R(o)}{\lambda^2\rho^2}\\
    	\le &1+\frac{R(o)}{\lambda^2}.
    \end{align*}
    That is, \(|\nabla \rho |\le C\). Finally, as the sublevel sets of \(f\) are known to be compact, we can immediately conclude from the relationship between \(\rho \) and \(f\) that the sublevel sets of \(\rho \) are compact as well. 
\end{proof}

Finally, we give a refined Kato inequality which is responsible for the extension below
\(p=1\) in the steady case. Although this inequality follows from the harmonic \(1\)-form argument in \cite[Theorem 4.2]{MR2657671}, we provide a direct proof for completeness.

\begin{lemma}\label{lem-c}
	Let \(u\) be a real-valued pluriharmonic function on a K\"ahler manifold \(M\). At every point at which \(|\nabla u|>0\),
	\begin{equation}\label{eq-v}
		|\nabla^2u|^2\ge 2|\nabla |\nabla u||^2.
	\end{equation}
	The inequality holds almost everywhere on \(M\) when \(\nabla |\nabla u|\) is interpreted as the weak gradient.
\end{lemma}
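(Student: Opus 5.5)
Since the statement is pointwise, the plan is to work at a point \(x\) with \(|\nabla u|(x)>0\), in local holomorphic coordinates that are unitary at \(x\), i.e.\ \(g_{\alpha\bar\beta}(x)=\delta_{\alpha\beta}\). Pluriharmonicity means \(u_{\alpha\bar\beta}=0\). The full Hessian therefore consists only of the pure-type components \(u_{\alpha\beta}\) and their conjugates \(u_{\bar\alpha\bar\beta}=\overline{u_{\alpha\beta}}\). With the paper's normalization \(g=2\Re(g_{\alpha\bar\beta}dz^\alpha\otimes d\bar z^\beta)\), I will first record
\[
|\nabla u|^2=2\sum_\alpha |u_\alpha|^2,\qquad |\nabla^2u|^2=2\sum_{\alpha,\beta}|u_{\alpha\beta}|^2 .
\]
The exact constants must be checked against the metric convention. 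The argument below only uses that both quantities carry the same factor as the analogous sums for a holomorphic map, and that the pure-type part contributes twice. Write \(\xi=(u_1,\dots,u_m)\) and let \(A=(u_{\alpha\beta})\), which is a symmetric complex matrix.

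Next I will compute the gradient of \(|\nabla u|^2=2g^{\alpha\bar\beta}u_\alpha u_{\bar\beta}\). Because the mixed derivatives vanish, its \(\gamma\)-derivative is \(\partial_\gamma|\nabla u|^2=2\sum_\alpha u_{\alpha\gamma}\overline{u_\alpha}\), that is, \(2(A\bar\xi)_\gamma\). Hence \(|\nabla|\nabla u|^2|^2\) equals a fixed constant multiple of \(|A\bar\xi|^2\). Dividing by \(4|\nabla u|^2\) then expresses \(|\nabla|\nabla u||^2\) as a constant multiple of \(|A\bar\xi|^2/|\xi|^2\).

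The key inequality is \(|A\bar\xi|^2\le \|A\|_{op}^2|\xi|^2\le \|A\|_{HS}^2|\xi|^2\). Tracking the constants, this should yield \(|\nabla^2u|^2\ge 2|\nabla|\nabla u||^2\). The gain of the factor \(2\) over the usual Kato inequality comes from the fact that the real Hessian splits into the \((2,0)\) and \((0,2)\) parts, which are conjugate to each other. The norm \(|\nabla^2u|^2\) counts \(\sum|u_{\alpha\beta}|^2\) twice, whereas the gradient of \(|\nabla u|\) only sees \(A\bar\xi\) once. Equivalently, \(\partial u\) is a holomorphic \((1,0)\)-form with \(\nabla\partial u=\partial\partial u\), and the Kato inequality \(|\nabla s|\ge|\nabla|s||\) for the holomorphic section \(s=\partial u\) is improved by the factor coming from the real/complex norm conversion. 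The main obstacle is to carry out this constant bookkeeping carefully with the paper's conventions. As a sanity check I will test the inequality on \(u=\Re(z^2)\) in \(\bbC\): there \(|\nabla^2u|^2=8\) and \(|\nabla|\nabla u||^2=4\), so equality holds, which confirms the constant \(2\).

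For the almost-everywhere statement, I will use that \(|\nabla u|\) is locally Lipschitz. On the open set \(\{|\nabla u|>0\}\) the inequality holds pointwise. On the zero set \(\{\nabla u=0\}\), the weak gradient of the Lipschitz function \(|\nabla u|\) vanishes almost everywhere, by the standard fact that \(\nabla w=0\) a.e.\ on any level set of a Sobolev function \(w\). Since \(|\nabla^2u|^2\ge0\), the inequality holds trivially there.
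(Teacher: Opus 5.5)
Your proposal is correct and is essentially the paper's argument. With \(g_{\alpha\bar\beta}(x)=\delta_{\alpha\beta}\), your bookkeeping gives exactly \(|\nabla|\nabla u||^2=|A\bar\xi|^2/|\xi|^2\) and \(|\nabla^2u|^2=2\|A\|_{HS}^2\). The paper instead rotates coordinates so that \(\xi\) is a multiple of \(e_1\), which turns your estimate \(|A\bar\xi|\le\|A\|_{HS}|\xi|\) into \(\sum_\gamma|u_{1\gamma}|^2\le\sum_{\alpha,\beta}|u_{\alpha\beta}|^2\); the almost-everywhere extension, via the vanishing of the weak gradient of \(|\nabla u|\) on \(\{\nabla u=0\}\), is the same as in the paper.
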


\begin{proof}
	Fix \(x\) with \(|\nabla u|(x)>0\) and choose K\"ahler normal holomorphic coordinates. After a unitary change of coordinates and a phase change in \(z^1\), we may arrange
    \[
        u_1(x)=\frac{|\nabla u|(x)}{\sqrt{2}},\qquad u_\beta (x)=0,\ \beta >1.
    \]
    Since \(u_{\bar\alpha \gamma =0}\), differentiation of \(|\nabla u|^2=2u_\alpha u_{\bar\alpha }\) gives
    \[
        (|\nabla u|^2)_\gamma =2u_{\alpha \gamma }u_{\bar\alpha }=\sqrt{2}su_{1\gamma }.
    \]
    It follows that
    \[
        |\nabla |\nabla u||^2=\sum_\gamma |u_{1\gamma }|^2.
    \]
    Therefore, 
    \[
        |\nabla^2u|^2=2\sum_{\alpha ,\beta }|u_{\alpha \beta }|^2\ge 2\sum_\gamma |u_{1\gamma }|^2=2|\nabla |\nabla u||^2.
    \]
    The function \(|\nabla u|^2\) is smooth, while \(|\nabla u|\) is continuous and locally Lipschitz. The weak gradient of a locally Lipschitz function vanishes almost everywhere on a level set, so the almost-everywhere extension follows.
\end{proof}

\section{Liouville theorems for gradient steady K\"ahler–Ricci solitons}\label{sec-3}

In this section, we prove Theorem~\ref{thm-a}. To this end, we need several lemmas. 

\begin{lemma}\label{lem-d}
	Let \((M,g)\) be a complete noncompact Riemannian manifold and suppose \(V(R)=\Vol B_R(o)\ge cR\) for all large \(R\). If \(\Psi \ge 1\) is nondecreasing and \eqref{eq-z} holds, then
	\begin{equation}\label{eq-q}
	    \int_M\frac{\dd v}{\Psi(d(o,x))}=\infty .
	\end{equation}
\end{lemma}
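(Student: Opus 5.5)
We will prove \eqref{eq-q} by a layer-cake (coarea-type) decomposition of the integral over the annuli $B_{k+1}(o)\setminus B_k(o)$, combined with the linear lower bound on volume.

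Since $\Psi$ is nondecreasing and $\Psi\ge 1$, on the annulus $A_k=B_{k+1}(o)\setminus B_k(o)$ we have $\Psi(d(o,x))\le \Psi(k+1)$. This gives
\[
    \int_M\frac{\dd v}{\Psi(d(o,x))}\ge \sum_{k\ge 0}\frac{\Vol A_k}{\Psi(k+1)}=\sum_{k\ge 0}\frac{V(k+1)-V(k)}{\Psi(k+1)} .
\]
Next we apply Abel summation. With $a_k=1/\Psi(k)$, which is nonincreasing and positive, a partial sum up to $N$ equals
\[
    \sum_{k=1}^{N}a_k\bigl(V(k)-V(k-1)\bigr)=a_N V(N)+\sum_{k=1}^{N-1}V(k)\,(a_k-a_{k+1})\ \ge\ \sum_{k=k_0}^{N-1}V(k)\,(a_k-a_{k+1}) .
\]
Every term here is nonnegative. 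Inserting $V(k)\ge ck$ for $k\ge k_0$ and summing by parts once more yields
\[
    \sum_{k=k_0}^{N-1} ck\,(a_k-a_{k+1})=c\Bigl(k_0a_{k_0}-Na_N+\sum_{k=k_0+1}^{N}a_k\Bigr)-\text{(bounded)} .
\]
The dangerous term is the negative quantity $-cNa_N$. To sidestep it, we keep the boundary term $a_NV(N)\ge cNa_N$ instead of discarding it; the two then cancel. This leaves a lower bound of the form
\[
    c\sum_{k=k_0+1}^{N}\frac{1}{\Psi(k)}-C .
\]

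It remains to compare this sum with the integral in \eqref{eq-z}. By monotonicity, $\int_{k}^{k+1}\dd t/\Psi(t)\le 1/\Psi(k)$. Hence $\sum_k 1/\Psi(k)$ dominates $\int_{k_0+1}^{\infty}\dd t/\Psi(t)=\infty$, and letting $N\to\infty$ gives \eqref{eq-q}.

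The only delicate point is the bookkeeping in the summation by parts: the boundary term must be retained so that it cancels the $-cNa_N$ contribution. An alternative, cleaner route avoids this. Write $1/\Psi(d)=\int_{d}^{\infty}(-\dd(1/\Psi))+\lim_{t\to\infty} 1/\Psi(t)$ and use Fubini to get
\[
    \int_M \frac{\dd v}{\Psi(d)}\ \ge\ \int_0^\infty V(t)\,\dd\!\left(-\frac1\Psi\right)(t),
\]
using only that $\Psi$ is monotone, so the Stieltjes measure is nonnegative. One then integrates $t\,\dd(-1/\Psi)$ by parts on $[R,T]$, handling the $T/\Psi(T)$ boundary term in the same way. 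If the lower bound for $\int V\,\dd(-1/\Psi)$ proves awkward, the fallback is to split on whether $\lim_{t\to\infty} t/\Psi(t)>0$. In that case the integrand is bounded below by about $c/(1+d)$ on a set whose volume grows linearly, and a direct annulus estimate suffices. Otherwise the boundary terms vanish and the integral reduces to $c\int_R^\infty \dd t/\Psi(t)=\infty$.
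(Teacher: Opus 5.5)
Your main argument is correct and is the paper's proof in discrete form. The paper writes $\int_{B_R(o)}w(d(o,x))\,\dd v=w(R)V(R)-\int_0^RV(t)w'(t)\,\dd t$ with $w=1/\Psi$. Just as you do with $a_NV(N)\ge cNa_N$, it keeps the boundary term $w(R)V(R)\ge cRw(R)$, which cancels the $-cRw(R)$ produced by integrating $-c\int_{R_0}^R t\,w'(t)\,\dd t$ by parts, leaving $c\int_{R_0}^R w\,\dd t-C$. Your unit-annulus Abel summation has the small advantage of never using $w'$ (the lemma only assumes $\Psi$ monotone), so the Stieltjes/Fubini alternative and the case-split fallback are unnecessary. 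That is just as well, since that Fubini bound discards the term $\Vol(M)\lim_{t\to\infty}\Psi(t)^{-1}$, which is the entire integral when $\Psi\equiv 1$.
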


\begin{proof}
	Set \(w=\frac{1}{\Psi }\), so \(w\) is nonincreasing. By the co-area formula and integration by parts,
	\begin{align*}
		\int_{B_R(o)}w(d(o,x))\dd v
		=&\int_0^Rw(t)\dd V(t)\\
		=&w(R)V(R)-\int_0^RV(t)w'(t)\dd t.
	\end{align*}
	Since \(w'\le 0\) and \(V(t)\ge ct\) for \(t\ge R_0\),
	\begin{align*}
		\int_{B_R(o)}w(d(o,x))\dd v
		\ge &cRw(R)-c\int_{R_0}^Rtw'(t)\dd t-C\\
		=&c\int_{R_0}^Rw(t)\dd t-C',
	\end{align*}
	which tends to infinity by \eqref{eq-z}. 
\end{proof}

Define 
\[
	\Theta (t)=\int_0^t\frac{\dd \tau }{\Psi(\tau )}.
\]
Then \(\Theta (t)\to \infty \). Choose a fixed Lipschitz function \(\chi :[0,\infty )\to [0,1]\) with \(\chi =1\) on \([0,1]\), \(\chi =0\) on \([2,\infty )\), and \(|\chi '|\le 2\), and put
\begin{equation}\label{eq-m}
	\eta_k(x)=\chi\left(\frac{\Theta (d(o,x))}{k}\right).
\end{equation}
By the Hopf--Rinow theorem and the divergence of \(\Theta \), these cutoffs have compact support, converge to \(1\) locally uniformly, and satisfy almost everywhere 
\begin{equation}\label{eq-n}
	|\nabla \eta _k|\le \frac{C}{k\Psi(d(o,x))}.
\end{equation}

\begin{lemma}\label{lem-e}
	Under the assumptions of Theorem~\ref{thm-a}, \(H=0\) and hence \(F=0\). 
\end{lemma}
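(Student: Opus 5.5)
The plan is to apply Lemma~\ref{lem-a} to the globally defined holomorphic function \(H=\langle(\nabla f)^{1,0},(\nabla u)^{0,1}\rangle\) with the cutoffs \(\eta_k\) from \eqref{eq-m}. This shows first that \(H\) is constant, and then that this constant must vanish.

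\emph{Step 1: \(|H|\lesssim|\nabla u|\).} In the steady case \(\lambda=0\), identity \eqref{eq-az} reads \(R+|\nabla f|^2=C_0\). Since \(R\ge 0\) on complete steady solitons, this gives \(|\nabla f|^2\le C_0\). Hence
\[
|H|\le \tfrac12|\nabla f|\,|\nabla u|\le C|\nabla u|
\]
everywhere on \(M\).

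\emph{Step 2: \(H\) is constant.} If \(M\) is compact, then a steady soliton has constant \(f\), and \(H\equiv 0\) trivially. So assume \(M\) is noncompact. Combining Step 1 with \eqref{eq-n} and \(\Psi\ge1\) gives
\[
\int_M|H|^p|\nabla\eta_k|^2\dd v\le \frac{C}{k^2}\int_M\frac{|\nabla u|^p}{\Psi(d(o,x))^2}\dd v\le\frac{C}{k^2}\int_M\frac{|\nabla u|^p}{\Psi(d(o,x))}\dd v\longrightarrow 0,
\]
using hypothesis \eqref{eq-y}. The \(\eta_k\) are compactly supported, Lipschitz, and converge to \(1\) locally uniformly. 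The last part of Lemma~\ref{lem-a} therefore yields \(H\equiv c\) for some constant \(c\in\bbC\).

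\emph{Step 3: \(c=0\).} Suppose \(c\neq 0\). Then Step 1 gives the pointwise lower bound \(|\nabla u|\ge 2|c|/C>0\) everywhere, so
\[
\int_M\frac{|\nabla u|^p}{\Psi(d(o,x))}\dd v\ge c'\int_M\frac{\dd v}{\Psi(d(o,x))}.
\]
I will then invoke Lemma~\ref{lem-d} to conclude that the right-hand side is infinite, contradicting \eqref{eq-y}. Hence \(H=0\), and therefore \(F=2\Re H=0\).

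The main obstacle is verifying the hypothesis of Lemma~\ref{lem-d}, namely at least linear volume growth \(\Vol B_R(o)\ge cR\). A steady soliton need not have \(\Ric\ge0\), so the Calabi--Yau linear growth theorem does not apply directly. My first approach is to cite the known result that every complete noncompact gradient steady Ricci soliton has at least linear volume growth (Munteanu--Sesum, or Carrillo--Ni).

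If that citation needs to be avoided, I would exploit the bound \(|\nabla f|\le C\). Specifically, I would compare volumes of balls along a ray using the soliton equation \(\Delta f=-R\le 0\) together with a Bishop-type argument for the weighted measure. The bounded oscillation of \(f\) on unit balls then transfers a uniform lower bound on the volume of unit balls to the Riemannian measure. Summing this lower bound over disjoint unit balls centered along a ray gives \(\Vol B_R(o)\ge cR\).
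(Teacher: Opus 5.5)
Your proposal is correct and follows the paper's proof essentially step for step. You bound $|H|\le\frac12\sqrt{C_0}\,|\nabla u|$ using $R+|\nabla f|^2=C_0$ and $R\ge 0$, apply Lemma~\ref{lem-a} with the cutoffs $\eta_k$ to make $H$ constant, and then exclude a nonzero constant via linear volume growth of steady solitons and Lemma~\ref{lem-d}; the paper cites Munteanu--Sesum for the volume growth exactly as your first approach does, so your fallback Bishop-type sketch is unnecessary, and the paper also treats $C_0=0$ separately, which you should do (or enlarge $C$) before dividing by $C$ in Step 3.
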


\begin{proof}
	Given that all harmonic function on a compact manifold are constants, only the noncompact case needs to considered. In the steady case, \eqref{eq-az} reduces to \(R+|\nabla f|^2=C_0\), where \(C_0\) is a nonnegative constant. If \(C_0=0\), then \(\nabla f=0\). It follows from \(|H|\le \frac{1}{2}|\nabla f||\nabla u|\) that \(H=0\). Assume \(C_0>0\), we have \(H\le \frac{\sqrt{C_0}}{2}|\nabla u|\). Apply Lemma~\ref{lem-a} with \(h=H\) and \(\eta =\eta_k\). Using \(\Psi \ge 1\),
	\begin{align*}
	    \int_M|H|^p|\nabla \eta_k|^2\dd v
	    \le &\frac{C}{k^2}\int_M\frac{|\nabla u|^p}{\Psi(d(o,x))^2}\dd v\\
	    \le &\frac{C}{k^2}\int_M\frac{|\nabla u|^p}{\Psi(d(o,x))}\dd v\longrightarrow 0.
	\end{align*}
	Thus \(H\) is constant. If it were nonzero, then the bound \(H\le \frac{\sqrt{C_0}}{2}|\nabla u|\) gives \(|\nabla u|\ge \Lambda \) for some positive constant \(\Lambda \), and gradient steady Ricci solitons have at least linear volume growth (see \cite{MR3023848}), which allows us to apply Lemma~\ref{lem-d}. These imply that
	\[
	\int_M\frac{|\nabla u|^p}{\Psi(d(o,x))}\dd v\ge \Lambda^p\int_M\frac{\dd v}{\Psi (d(o,x))}=\infty ,
	\]
	which contradicts \eqref{eq-y}. Therefore, \(H=0\), and consequently \(F=0\).
\end{proof}

\begin{lemma}\label{lem-f}
	Let \((M,g,J,f)\) be a complete gradient steady K\"ahler-Ricci soliton. Let \(u\) be a pluriharmonic function satisfying \(F=\langle \nabla f,\nabla u\rangle =0\). Then for every \(p>0\), every compactly supported locally Lipschitz function \(\eta \), and every \(0<\delta <1\),
	\begin{equation}\label{eq-o}
		(1-\delta )c_p\int_M\eta^2|\nabla u|^{p-2}|\nabla |\nabla u||^2\dd v+\frac{1}{p}\int_MR\eta^2|\nabla u|^p\dd v\le \frac{1}{\delta c_p}\int_M|\nabla u|^p|\nabla \eta |^2\dd v+\frac{1}{p}\int_M|\nabla f||\nabla (\eta^2)||\nabla u|^p\dd v,
	\end{equation}
    where \(c_p=\min\{2,p\}\). For \(p<2\), the first integrand is defined to be zero almost everywhere on \(\{|\nabla u|=0\}\).
\end{lemma}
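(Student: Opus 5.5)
The plan is to combine the Bochner formula for \(|\nabla u|^2\) with the soliton equation, using \(F=0\) to turn the Ricci term into a drift term. That drift term is then integrated by parts against \(\eta^2\); this produces the scalar curvature term and the term \(|\nabla f||\nabla(\eta^2)|\). Throughout, write \(w=|\nabla u|\).

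\emph{Step 1: pointwise identity.} Since \(u\) is pluriharmonic, it is harmonic. Bochner's formula gives
\[
\tfrac12\Delta w^2=|\nabla^2u|^2+\Ric(\nabla u,\nabla u),
\]
and the steady soliton equation gives \(\Ric=-\nabla^2f\). Differentiating \(F=\langle\nabla f,\nabla u\rangle=0\) in the direction \(\nabla u\) yields \(\nabla^2f(\nabla u,\nabla u)+\nabla^2u(\nabla f,\nabla u)=0\). Hence
\[
\Ric(\nabla u,\nabla u)=\nabla^2u(\nabla f,\nabla u)=\tfrac12\langle\nabla f,\nabla w^2\rangle .
\]
Combining this with the refined Kato inequality of Lemma~\ref{lem-c}, \(|\nabla^2u|^2\ge 2|\nabla w|^2\), we get on \(\{w>0\}\)
\[
w\Delta w\ge |\nabla w|^2+w\langle\nabla f,\nabla w\rangle .
\]
Consequently,
\[
\Delta w^p\ge p^2w^{p-2}|\nabla w|^2+\langle\nabla f,\nabla w^p\rangle .
\]

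\emph{Step 2: integration.} Multiply by \(\eta^2\) and integrate. The Laplacian term becomes \(-\int\langle\nabla\eta^2,\nabla w^p\rangle\), which is at most \(2p\int\eta w^{p-1}|\nabla\eta||\nabla w|\). For the drift term, use \(\Delta f=-R\), which follows from tracing the steady equation. This gives
\[
\int\eta^2\langle\nabla f,\nabla w^p\rangle=\int R\eta^2w^p-\int w^p\langle\nabla f,\nabla\eta^2\rangle ,
\]
and the last term is bounded by \(\int|\nabla f||\nabla(\eta^2)|w^p\).

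Divide by \(p\). Then apply Young's inequality to the gradient term:
\[
2\,\bigl(\eta w^{(p-2)/2}|\nabla w|\bigr)\bigl(w^{p/2}|\nabla\eta|\bigr)\le \delta c_p\,\eta^2w^{p-2}|\nabla w|^2+\frac{1}{\delta c_p}w^p|\nabla\eta|^2 .
\]
The coefficient \(p\) on the left side dominates \(c_p\), so after absorbing we are left with \((1-\delta)c_p\), and \eqref{eq-o} follows.

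\emph{Main obstacle: regularization.} The computation above is formal where \(w=0\), and for \(p<2\) the weight \(w^{p-2}\) is singular there. I would rerun Steps 1--2 with \(w_\varepsilon=(w^2+\varepsilon)^{1/2}\). The Bochner identity gives
\[
w_\varepsilon\Delta w_\varepsilon+|\nabla w_\varepsilon|^2=|\nabla^2u|^2+\tfrac12\langle\nabla f,\nabla w^2\rangle .
\]
Moreover \(|\nabla w_\varepsilon|\le|\nabla w|\), and Kato still yields \(|\nabla^2 u|^2\ge 2|\nabla w_\varepsilon|^2\). The drift term equals \(w_\varepsilon\langle\nabla f,\nabla w_\varepsilon\rangle\) exactly.

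The loss relative to the formal case appears in the factor \(p-1\) (negative when \(p<1\)) multiplying \(w_\varepsilon^{p-2}|\nabla w_\varepsilon|^2\), combined with the Kato gain. Working carefully, the coefficient becomes \(\min\{2,p\}\)-type, namely \(c_p\), rather than \(p\); this is why \(c_p\) appears in the statement.

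Next, the drift integration by parts produces \(\int R\eta^2 w_\varepsilon^{p-2}w^2\), which is nonnegative since \(R\ge0\). We then let \(\varepsilon\downarrow0\):
\begin{itemize}
\item on the right side, use dominated convergence, since \(\eta\) has compact support and everything is locally bounded;
\item on the left side, use Fatou's lemma;
\item on \(\{w=0\}\), use the convention that the integrand is zero, justified because \(\nabla w=0\) a.e. there, as noted in Lemma~\ref{lem-c}.
\end{itemize}
This regularization step, and in particular checking the constant \(c_p\) for \(p<1\), is where I expect the real work to be.
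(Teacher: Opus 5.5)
Your proposal is correct and is essentially the paper's proof. The paper multiplies $\frac12\Delta w^2\ge 2|\nabla w|^2+\frac12\langle\nabla f,\nabla w^2\rangle$ by $\eta^2(w^2+\varepsilon)^{(p-2)/2}$, which is the same computation as your Steps 1--2 run for $w_\varepsilon$, followed by the same Young, $\Delta f=-R$ and Fatou steps.

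Your worry about the regularization is unfounded, though. Since $|\nabla^2u|^2\ge 2|\nabla w_\varepsilon|^2$ and the drift term is exactly $w_\varepsilon\langle\nabla f,\nabla w_\varepsilon\rangle$, the Step 1 inequality holds verbatim for $w_\varepsilon$ on all of $M$. You therefore get the coefficient $p-\delta c_p\ge(1-\delta)c_p$ with no loss, and the curvature term is $\int R\eta^2w_\varepsilon^p$ rather than $\int R\eta^2 w_\varepsilon^{p-2}w^2$; it passes to the limit by dominated convergence. The $c_p$ in the statement comes only from the paper bounding its pointwise factor $2+(p-2)w^2/w_\varepsilon^2$ below by $\min\{2,p\}$, which loses something only for $p>2$, not for $p<1$.
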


\begin{proof}
	Throughout this proof, we write \(s=|\nabla u|\). By taking the covariant derivative of \(\langle \nabla f,\nabla u\rangle =0\) with respect to \(\nabla u\) and using the soliton equation in the steady case, we obtain
	\begin{align*}
	    0=&\nabla^2f(\nabla u,\nabla u)+\nabla^2u(\nabla f,\nabla u)\\
	    =&-\Ric (\nabla u,\nabla u)+\frac{1}{2}\langle \nabla f,\nabla (s^2)\rangle .
	\end{align*}
	This, together with the Bochner formula and the Kato inequality for \(\dd u\) in \cite{MR2657671}, yields
    \begin{equation}\label{eq-p}
    	\begin{split}
    		\frac{1}{2}\Delta s^2
    		=&|\nabla^2u|^2+\Ric (\nabla u,\nabla u)\\
            \ge &2|\nabla s|^2+\frac{1}{2}\langle \nabla f,\nabla (s^2)\rangle .
    	\end{split}
    \end{equation}

    Now fix \(\varepsilon >0\), and set \(q=s^2+\varepsilon \) and \(\alpha =\frac{p-2}{2}\). We multiply \eqref{eq-p} by \(\eta^2q^\alpha \) and integrating over \(M\), we obtain
    \begin{equation}\label{eq-s}
    	\frac{1}{2}\int_M\eta^2q^\alpha \Delta (s^2)\dd v
    	\ge 2\int_M\eta^2q^\alpha |\nabla s|^2\dd v
    	+\frac{1}{2}\int_M\eta^2q^\alpha \langle \nabla f,\nabla (s^2)\rangle \dd v.
    \end{equation}
    Integration by parts gives
    \begin{equation}\label{eq-t}
        \begin{split}
        	\frac{1}{2}\int_M\eta^2q^\alpha \Delta (s^2)\dd v
    	    =&-\frac{1}{2}\int_M\langle \nabla(\eta^2q^\alpha ),\nabla (s^2)\rangle \dd v\\
    	    =&-\int_M\eta q^\alpha \langle \nabla \eta ,\nabla (s^2)\rangle \dd v
    	    -\frac{\alpha }{2}\int_M\eta^2q^{\alpha -1}|\nabla (s^2)|^2\dd v\\
    	    =&-2\int_M\eta q^\alpha s\langle \nabla \eta ,\nabla s\rangle \dd v
    	    -(p-2)\int_M\eta^2q^{(p-4)/2}s^2|\nabla s|^2\dd v
        \end{split}
    \end{equation}
    Combining \eqref{eq-s} and \eqref{eq-t} and rearranging terms gives  
    \begin{equation}\label{eq-u}
    	2\int_M\eta^2q^\alpha |\nabla s|^2\dd v
    	+(p-2)\int_M\eta^2q^{(p-4)/2}s^2|\nabla s|^2\dd v
    	\le -\frac{1}{2}\int_M\eta^2q^\alpha \langle \nabla f,\nabla (s^2)\rangle \dd v
    	-2\int_M\eta q^\alpha s\langle \nabla \eta ,\nabla s\rangle \dd v.
    \end{equation}
    Since \(0\le s^2/q\le 1\), we have \(2+(p-2)s^2/q\ge c_p\), and consequently the left-hand side of \eqref{eq-u}
    \begin{equation}\label{eq-aa}
        2\int_M\eta^2q^\alpha |\nabla s|^2\dd v
    	+(p-2)\int_M\eta^2q^{(p-4)/2}s^2|\nabla s|^2\dd v
    	\ge c_p\int_M\eta^2q^{(p-2)/2}|\nabla s|^2\dd v.
    \end{equation}
    For the second term on the right-hand side of \eqref{eq-u}, noting that \(s^2\le q\), Young's inequality gives
    \begin{equation}\label{eq-ab}
    	2\left|\int_M\eta q^\alpha s\langle \nabla \eta ,\nabla s\rangle \dd v\right|
    	\le \delta c_p\int_M\eta^2q^{(p-2)/2}|\nabla s|^2\dd v
    	+\frac{1}{\delta c_p}\int_Mq^{p/2}|\nabla \eta |^2\dd v. 
    \end{equation}
    Now we treat the first term on the right-hand side of \eqref{eq-u}. Since \(q^\alpha \nabla (s^2)=q^\alpha \nabla q=\frac{2}{p}\nabla (q^{p/2})\), by using Integration by parts, we have
    \begin{equation}\label{eq-ac}
        \begin{split}
            -\frac{1}{2}\int_M\eta^2q^\alpha \langle \nabla f,\nabla (s^2)\rangle \dd v
            =&-\frac{1}{p}\int_M\eta^2\langle \nabla f,\nabla (q^{p/2})\rangle \dd v\\
            =&\frac{1}{p}\int_Mq^{p/2}\divg (\eta^2\nabla f)\dd v\\
            =&\frac{1}{p}\int_Mq^{p/2}\langle \nabla (\eta^2),\nabla f\rangle \dd v
            +\frac{1}{p}\int_M\eta^2q^{p/2}\Delta f\dd v\\
            \le &\frac{1}{p}\int_M|\nabla (\eta^2)||\nabla f|q^{p/2}\dd v
            -\frac{1}{p}\int_MR\eta^2q^{p/2}\dd v,
        \end{split}
    \end{equation}
    where for the last equality we used \(\Delta f=-R\), which is the trace of the steady soliton equation. Combining \eqref{eq-u}, \eqref{eq-aa}, \eqref{eq-ab} and \eqref{eq-ac}, we obtain
    \[
        (1-\delta )c_p\int_M\eta^2q^{(p-2)/2}|\nabla s|^2\dd v
        +\frac{1}{p}\int_MR\eta^2q^{p/2}\dd v
        \le \frac{1}{\delta c_p}\int_Mq^{p/2}|\nabla \eta |^2\dd v
        +\frac{1}{p}\int_M|\nabla (\eta^2)||\nabla f|q^{p/2}\dd v.
    \]
    Let \(\varepsilon \downarrow 0\), since \(q^{p/2}\longrightarrow s^p\), and by Fatou's lemma
    \[
        \int_M\eta^2s^{p-2}|\nabla s|^2\dd v\le \liminf_{\varepsilon \downarrow 0}\int_M\eta^2q^{(p-2)/2}|\nabla s|^2\dd v,
    \]
    we obtain \eqref{eq-o}.
\end{proof}

We now prove Theorem~\ref{thm-a}.

\begin{proof}[Proof of Theorem~\ref{thm-a}]
	If \(M\) is compact, harmonicity gives the conclusion. Assume \(M\) is noncompact. By Lemma~\ref{lem-e}, \(F=0\), so Lemma~\ref{lem-f} applies. Use the cutoffs \eqref{eq-m}. From \eqref{eq-n}, together with \(\Psi \ge 1\), and \(|\nabla f|\le C\), we have
    \begin{align*}
    	&\int_M|\nabla u|^p|\nabla \eta_k|^2\dd v\le \frac{C}{k^2}\int_M\frac{|\nabla u|^p}{\Psi(d(o,x))^2}\dd v\le \frac{C}{k^2}\int_M\frac{|\nabla u|^p}{\Psi(d(o,x))}\dd v\longrightarrow 0,\\
    	&\int_M|\nabla f||\nabla (\eta_k^2)||\nabla u|^p\dd v\le \frac{C}{k}\int_M\frac{|\nabla u|^p}{\Psi(d(o,x))}\longrightarrow 0.
    \end{align*}
    Since \(R\ge 0\), Fatou's lemma in \eqref{eq-o} gives
    \begin{equation}\label{eq-r}
    	\int_M|\nabla u|^{p-2}|\nabla |\nabla u||^2\dd v=0.
    \end{equation}
    On each connected component of the open set \(\{|\nabla u|>0\}\), the function \(|\nabla u|\) is smooth and \eqref{eq-r} implies that it is a positive constant. By continuity, such a component is also closed. Since \(M\) is connected, either \(|\nabla u|\equiv 0\) or \(|\nabla u|\equiv c>0\) on \(M\). The latter alternative contradicts \eqref{eq-y} and \eqref{eq-q}. Hence \(|\nabla u|=0\), so \(u\) is constant.
\end{proof}

\section{Liouville theorems for gradient shrinking K\"ahler–Ricci solitons}\label{sec-4}

In this section, we prove the Liouville theorems in the shrinking case. Since Theorem~\ref{thm-c} contains Theorem~\ref{thm-b} as a special case, we mainly focus on the proof of Theorem~\ref{thm-c}. 

The following lemma extracts the final step in the proof of \cite[Theorem~0.2]{MR3326575}. There, one first proves that \(u\) is harmonic, comparing \(\Delta_\varphi u=0\) with \(\Delta u=0\) then gives \(\langle\nabla\varphi,\nabla u\rangle=0\). Properness of \(\varphi\) yields constancy by Green’s identity on compact sublevel sets. In our setting, \(H=0\) gives the required orthogonality directly.

\begin{lemma}\label{lem-g}
	Let \((M,g)\) be a connected complete Riemannian manifold, and let \(f,u\in C^\infty(M)\). Assume that \(f\) is proper and bounded below, that \(u\) is harmonic, and that
    \[
        \langle \nabla f,\nabla u\rangle=0
    \]
    everywhere on \(M\). Then \(u\) is constant.
\end{lemma}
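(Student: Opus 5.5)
The plan is to run Green's identity for \(u\) on the compact sublevel sets of \(f\), exploiting the fact that \(\nabla f\) is orthogonal to \(\nabla u\) and hence the outward normal flux of \(u\) through a regular level set vanishes. For \(t\) larger than \(\min_M f\), let \(\Omega_t=\{f<t\}\). Since \(f\) is proper and bounded below, \(\overline{\Omega_t}\subset\{f\le t\}\) is compact. By Sard's theorem, almost every \(t\) is a regular value of \(f\). For such \(t\), \(\Omega_t\) is a relatively compact open set with smooth boundary \(\partial\Omega_t=\{f=t\}\) and outward unit normal \(\nu=\nabla f/|\nabla f|\). These sets exhaust \(M\) as \(t\to\infty\).

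For a regular value \(t\), apply the divergence theorem to the vector field \(u\nabla u\) on \(\Omega_t\). Using \(\Delta u=0\),
\[
    \int_{\Omega_t}|\nabla u|^2\dd v=\int_{\Omega_t}\divg(u\nabla u)\dd v=\int_{\partial\Omega_t}u\,\frac{\langle\nabla u,\nabla f\rangle}{|\nabla f|}\dd\sigma=0,
\]
because \(\langle\nabla f,\nabla u\rangle=0\) everywhere. Hence \(\nabla u\equiv0\) on \(\Omega_t\) for every regular value \(t\). Regular values are unbounded, being of full measure, and \(\bigcup_t\Omega_t=M\). Therefore \(\nabla u\equiv0\) on \(M\), and connectedness gives that \(u\) is constant.

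The only delicate point is the regularity of the boundary. If one wishes to avoid Sard's theorem, an alternative is to integrate \(\divg(\phi(f)\,u\nabla u)\) with \(\phi\) a smooth cutoff in the variable \(f\). The term \(\phi'(f)\,u\langle\nabla f,\nabla u\rangle\) vanishes identically, and \(\phi(f)\) has compact support by properness, so the same conclusion \(\int_M\phi(f)|\nabla u|^2\dd v=0\) follows directly. This cutoff version is cleaner, and I would present it as the main argument, with the Sard version as the intuition.
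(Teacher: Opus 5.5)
Your proposal is correct, and its Sard version is exactly the paper's proof: regular values \(T_j\to\infty\), Green's identity for \(u\) on the compact sublevel sets \(\{f<T_j\}\), and vanishing of the normal derivative \(\partial_\nu u=\langle\nabla u,\nabla f\rangle/|\nabla f|\) on their boundaries. The cutoff variant you say you would present as the main argument is the same integration by parts, with \(\phi(f)\) in place of the indicator of a sublevel set. It is a valid and slightly cleaner alternative, since \(\divg(\phi(f)u\nabla u)=\phi(f)|\nabla u|^2\) under the hypotheses, and it avoids both Sard's theorem and any boundary regularity.
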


\begin{proof}
	If \(M\) is compact, every harmonic function on \(M\) is constant, so assume that \(M\) is noncompact. Since a proper function on a noncompact manifold is unbounded above, Sard's theorem supplies regular values tending to infinity. Choose regular values \(T_j\to \infty \) and set \(\Omega_j=\{x\in M: f(x)<T_j\}\). The properness of \(f\) makes \(\overline{\Omega_j}\) compact, and regularity of \(T_j\) gives a smooth boundary with outward unit normal \(\nu =\frac{\nabla f}{|\nabla f|}\). By \(\langle \nabla f,\nabla u\rangle=0\), \(\partial_\nu u=0\) on \(\partial \Omega_j\). The divergence theorem gives
	\[
	    \int_{\Omega_j}|\nabla u|^2\dd v=\int_{\partial }u\partial_\nu u\dd v_{\partial \Omega_j}-\int_{\Omega_j}u\Delta u\dd v=0.
	\]
	The regular sublevel sets exhaust \(M\), so \(\nabla u=0\) everywhere. The connectedness of \(M\) gives the conclusion.
\end{proof}

To prove Theorem~\ref{thm-c}, we need the following proposition, which gives a Liouville criterion in terms of a suitable sequence of cutoff functions.

\begin{proposition}\label{prop-a}
	Let \((M,g,J,f)\) be a connected complete noncompact gradient shrinking K\"ahler-Ricci soliton, let \(u\) be a real-valued pluriharmonic function on \(M\), and let \(0<p<\infty \). Suppose that there are compactly supported locally Lipschitz function \(0\le \eta_j\le 1\), converging to \(1\) locally uniformly, such that
    \begin{equation}\label{eq-ad}
    	\lim_{j\to \infty }\int_M|H|^p|\nabla \eta_j|^2\dd v=0.
    \end{equation}
    Then \(u\) is constant. 
\end{proposition}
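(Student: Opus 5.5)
The plan is to show that \(H\) is constant, that this constant is zero, and then to apply Lemma~\ref{lem-g} with the shrinking potential \(f\).

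\textbf{Step 1: \(H\) is constant.} Since \(u\) is pluriharmonic, \((\nabla u)^{0,1}\) is the dual of the holomorphic \((1,0)\)-form \(\partial u\) (antiholomorphic as a vector field). Since the soliton is K\"ahler, \((\nabla f)^{1,0}\) is a holomorphic vector field. Their pairing, which in coordinates is \(H=f_{\bar\beta}g^{\alpha\bar\beta}u_\alpha\) up to the normalization used in the introduction, is therefore a globally defined holomorphic function. Hypothesis \eqref{eq-ad} is exactly the input required by Lemma~\ref{lem-a}, applied with \(h=H\) and \(\eta=\eta_j\). Its final assertion then gives that \(H\) equals a constant \(c\in\bbC\).

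\textbf{Step 2: \(c=0\).} By Lemma~\ref{lem-b}, \(f\) is proper and attains its minimum at some point \(o\). There \(\nabla f(o)=0\), so \(H(o)=0\) and hence \(c=0\).

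\textbf{Step 3: \(u\) is constant.} From \(c=0\) we get \(F=2\Re H=\langle\nabla f,\nabla u\rangle=0\) everywhere. A pluriharmonic function is harmonic, and \(f\) is proper and bounded below by Lemma~\ref{lem-b}. Lemma~\ref{lem-g} therefore applies directly and shows that \(u\) is constant.

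The only point needing care is Step 1, namely checking that \(H\) is genuinely holomorphic. Applying \(\nabla_{\bar\gamma}\) to \(H\), the term \(f_{\bar\beta\bar\gamma}\) vanishes because \(\nabla f\) is a holomorphic vector field, that is, \(f_{\bar\beta\bar\gamma}=0\) from \(\mathcal L_{\nabla f}J=0\). The term \(u_{\alpha\bar\gamma}\) vanishes by pluriharmonicity. Hence \(\bar\partial H=0\), and no finiteness of \(\int|\nabla u|^p\) is needed beyond \eqref{eq-ad}.

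Note that in Step 2 we use only the existence of a critical point of \(f\), not any growth estimate. The decay needed to kill the constant, which in the steady case came from volume growth, is unnecessary here.
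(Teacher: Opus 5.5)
Your proposal is correct and follows the paper's proof essentially step for step: Lemma~\ref{lem-a} with $h=H$ makes $H$ constant, $\nabla f(o)=0$ at a minimum point of $f$ (supplied by Lemma~\ref{lem-b}) forces $H=0$ and hence $F=2\Re H=0$, and Lemma~\ref{lem-g} then gives that $u$ is constant. Your explicit check that $\bar\partial H=0$, using $f_{\bar\beta\bar\gamma}=0$ and $u_{\alpha\bar\gamma}=0$, is a detail the paper leaves implicit.
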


\begin{proof}
	Applying Lemma~\ref{lem-a} to \(h=H\) and \(\eta_j\), then using \eqref{eq-ad}, shows that \(H\) is constant. At a minimum point \(o\) of \(f\), we have \(\nabla f(o)=0\), and hence \(H(o)=0\). Since \(H\) is constant, it follows that \(H=0\), and therefore \(F=2\Re H=0\). Lemma~\ref{lem-g} gives that \(u\) is constant. 
\end{proof}

We can now present the proof of Theorem~\ref{thm-c}.

\begin{proof}[Proof of Theorem~\ref{thm-c}]
    If \(M\) is compact, the conclusion follows immediately from the harmonicity of \(u\). Assume that \(M\) is noncompact. Define
    \[
        A_\omega (t)=\int_1^t\frac{\sqrt{\omega(\tau )}}{\tau^{p/2}}\dd \tau ,\qquad t\ge 1.
    \]
    Since \(\omega \) is positive and continuous, \(A_\omega \) is continuously differentiable and strictly increasing, with \(A_\omega (1)=0\). By the condition \eqref{eq-w}, \(\lim\limits_{t\to \infty }A_\omega (t)=\infty \). For \(R>1\), define
    \[
        \eta_R(x)=\left(1-\frac{A_\omega (\rho(x))}{A_\omega (R)}\right)^+,
    \]
    where \(a^+:=\max\{a,0\}\) denotes the positive part of \(a\). It follows that \(0\le \eta_R\le 1\) and \(\supp \eta_R\subset \{\rho \le R\}\). The latter set is compact because \(\rho \) is proper by Lemma~\ref{lem-b}. For every compact \(K\subset M\), the function \(\rho \) is bounded on \(K\), and hence \(\lim\limits_{R\to \infty }\sup\limits_K\frac{A_\omega (\rho )}{A_\omega (R)}=0\). Therefore, \(\eta_R\) converges \(1\) locally uniformly on \(M\). Using \(|\nabla \rho |\le C\) (Lemma~\ref{lem-b}), we have 
    \[
        |\nabla \eta_R|\le \frac{C}{A_\omega (R)}\frac{\sqrt{\omega (\rho )}}{\rho^{p/2}}
    \]
    almost everywhere on \(\{\rho <R\}\). Again by Lemma~\ref{lem-b},
    \[
        |H|\le \frac{1}{2}|\nabla f||\nabla u|\le C\rho |\nabla u|.
    \]
    It follows that
    \begin{align*}
    	\int_M|H|^p|\nabla \eta_R|^2\dd v
    	\le &\frac{C}{A_\omega (R)^2}\int_{\{\rho <R\}}\rho^p|\nabla u|^p\frac{\omega(\rho )}{\rho^p}\dd v\\
    	\le &\frac{C}{A_\omega (R)^2}\int_M\omega(\rho )|\nabla u|^p\dd v.
    \end{align*}
    The last integral is finite by assumption \eqref{eq-x}. Consequently, 
    \[
        \lim_{R\to \infty }\int_M|H|^p|\nabla \eta_R|^2\dd v=0.
    \]
    Choose any sequence \(R_j\to \infty \) and set \(\eta_j=\eta_{R_j}\). Then \(\{\eta_j\}\) satisfies the assumption of Proposition~\ref{prop-a}. Hence \(u\) is constant.
\end{proof}

\section{Necessity of the soliton structure for Liouville theorems in the range \(0<p<1\)}\label{sec-5}

It is essential to distinguish between real harmonic and holomorphic Liouville theorems. Yau's theorem for holomorphic functions holds for \(p>0\) (see \cite{MR417452,MR3951749}), whereas the real harmonic theorem requires \(p>1\). This threshold \(p>1\) is strict on arbitrary complete manifolds, even for pluriharmonic functions in a K\"ahler setting. The following explicit model shows that a Liouville conclusion under the assumption \(|\nabla u|\in L^p(M)\) fails for pluriharmonic functions on general complete K\"ahler manifolds when \(0<p<1\).

\begin{proposition}\label{peop-b}
	Let \(M=\bbR \times S^1\) with coordinates \((t,\theta )\), where \(\theta \) has period \(2\pi \), and let \(g=\dd t^2+e^{-2\cosh t}\dd \theta^2\). Define 
    \begin{equation}\label{eq-ae}
        u(t,\theta )=\int_0^te^{\cosh \tau }\dd \tau .
    \end{equation}
    Then \((M,g)\) is a complete K\"ahler manifold of complex dimension one and \(u\) is a nonconstant real-valued pluriharmonic function. Moreover,
    \begin{equation}\label{eq-af}
    	u\in L^p(M)\qquad \mbox{for every } 0<p\le 1,
    \end{equation}
    and
    \begin{equation}\label{eq-ag}
    	|\nabla u|\in L^p(M)\qquad \mbox{for every } 0<p<1,
    \end{equation}
    while \(|\nabla u|\notin L^1(M)\).
\end{proposition}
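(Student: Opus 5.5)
The proof is a direct verification in four steps: completeness and the Kähler structure, pluriharmonicity, the two integrability claims, and the failure at \(p=1\).

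\emph{Completeness and Kähler structure.} Since \(t\) is arc length along the \(\bbR\)-factor and \(g\ge \dd t^2\), every divergent curve has infinite length, so \(g\) is complete. The conformal structure is given by isothermal coordinates. Set
\[
s(t)=\int_0^t e^{\cosh\tau}\dd\tau ,
\]
so that \(\dd s=e^{\cosh t}\dd t\). Then
\[
g=e^{-2\cosh t}\left(\dd s^2+\dd\theta^2\right),
\]
which is conformal to the flat metric in the coordinate \(z=s+\iu\theta\). The map \(t\mapsto s\) is an increasing diffeomorphism of \(\bbR\) onto an interval \((-s_\infty,s_\infty)\), and in fact \(s_\infty=\infty\) because \(e^{\cosh\tau}\ge1\). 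Hence \(M\) is biholomorphic to the cylinder \(\bbC/(2\pi\iu\bbZ)\) equipped with a conformal metric. Every Hermitian metric on a Riemann surface is Kähler, because the Kähler form is of top degree and therefore closed.

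\emph{Pluriharmonicity.} By construction \(u=s=\Re z\). The function \(\Re z\) is well defined on the quotient, since \(z\mapsto z+2\pi\iu\) does not change the real part. It is the real part of a holomorphic function, so \(\partial\bar\partial u=0\), and it is clearly nonconstant.

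\emph{Integrability.} The volume form is \(\dd v=e^{-\cosh t}\,\dd t\,\dd\theta\), and
\[
|\nabla u|=|u'(t)|=e^{\cosh t}.
\]
This gives
\[
\int_M|\nabla u|^p\dd v=2\pi\int_{\bbR}e^{(p-1)\cosh t}\dd t .
\]
The integral is finite exactly when \(p<1\), since then \(e^{-(1-p)\cosh t}\) decays double-exponentially. At \(p=1\) the integrand is identically \(1\), so the integral diverges. This proves \eqref{eq-ag} and \(|\nabla u|\notin L^1(M)\).

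For \eqref{eq-af}, we estimate \(|u(t)|\le \int_0^{|t|}e^{\cosh\tau}\dd\tau\). The key point is that \(\cosh\) is increasing on \([0,\infty)\) with derivative \(\sinh\), so for large \(|t|\) one has
\[
|u(t)|\le C\,\frac{e^{\cosh t}}{\sinh|t|} .
\]
To verify this, split the integral at \(\tau=1\) and use \(e^{\cosh\tau}\le e^{\cosh\tau}\sinh\tau/\sinh 1\) for \(\tau\ge1\). Then
\[
|u|^p e^{-\cosh t}\le C\,e^{(p-1)\cosh t}(\sinh|t|)^{-p}.
\]
For \(p<1\) this is integrable by double-exponential decay. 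At \(p=1\) the bound is \(C/\sinh|t|\), which is still not integrable at infinity. That is the main obstacle, and the crude estimate has to be sharpened there.

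\emph{The case \(p=1\).} Integrating by parts once more, with \(e^{\cosh\tau}=\bigl(e^{\cosh\tau}\bigr)'/\sinh\tau\), gives
\[
u(t)=\frac{e^{\cosh t}}{\sinh t}+O\!\left(\frac{e^{\cosh t}}{\sinh^2 t}\right).
\]
This yields \(|u|e^{-\cosh t}\sim 1/\sinh|t|\sim 2e^{-|t|}\), which is integrable on \(\bbR\). So the obstruction disappears once \(\sinh\) is kept rather than discarded, and \(u\in L^1(M)\).

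Near \(t=0\) everything is bounded, so the local contribution to each integral is finite. For \(0<p<1\), Hölder's inequality on the finite-volume measure \(\dd v\) (\(\Vol M=2\pi\int_{\bbR}e^{-\cosh t}\dd t<\infty\)) then gives \(u\in L^p(M)\) directly from \(u\in L^1(M)\). This completes \eqref{eq-af}.
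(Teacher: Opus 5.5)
Your plan matches the paper's. You compute $\dd v=e^{-\cosh t}\,\dd t\,\dd\theta$ and $|\nabla u|=e^{\cosh t}$, which gives \eqref{eq-ag} and $|\nabla u|\notin L^1(M)$ exactly as in the paper, and you reduce \eqref{eq-af} to the asymptotics $u(t)\sim e^{\cosh t}/\sinh t$.

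Elsewhere your choices are equivalent or slightly cleaner:
\begin{itemize}
\item The isothermal coordinate $s$ makes the complex structure explicit, $(M,J)\cong\bbC/(2\pi\iu\bbZ)$, so pluriharmonicity is immediate. The paper instead appeals to the existence of a compatible complex structure and checks $\Delta_gu=a^{-1}(au')'=0$ with $a=e^{-\cosh t}$.
\item The divergent-curve criterion replaces the paper's Cauchy-sequence argument.
\item H\"older on the finite-volume space derives $0<p<1$ from $p=1$.
\end{itemize}
One wording point: $z$ is not single valued on the quotient, so $u$ is only \emph{locally} the real part of a holomorphic function (globally $u=\log|e^z|$). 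That suffices for $\partial\bar\partial u=0$. The paper's subsequent remark stresses that no global holomorphic $h$ with $u=\Re h$ exists.

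The one genuinely analytic step, the upper bound on $|u|$ needed at $p=1$, is not established as written, and the passage contains a false claim.
\begin{itemize}
\item Your verification does not give the bound you state. Splitting at $\tau=1$ and using $e^{\cosh\tau}\le e^{\cosh\tau}\sinh\tau/\sinh 1$ gives $\int_1^te^{\cosh\tau}\dd\tau\le(e^{\cosh t}-e^{\cosh 1})/\sinh 1$. That is only $|u|\le Ce^{\cosh t}$, not $Ce^{\cosh t}/\sinh|t|$. This crude bound still handles $p<1$.
\item The claim that $C/\sinh|t|$ is not integrable at infinity is false. It behaves like $2Ce^{-|t|}$, as you note yourself one paragraph later. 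Had $|u|\le Ce^{\cosh t}/\sinh|t|$ really been proved, $p=1$ would already be settled. The actual obstruction is that what you proved gives only $|u|e^{-\cosh t}\le C$.
\item The remainder after the second integration by parts is not controlled. The identity is
\[
\int_1^t e^{\cosh\tau}\dd\tau=\frac{e^{\cosh t}}{\sinh t}-\frac{e^{\cosh 1}}{\sinh 1}+\int_1^t e^{\cosh\tau}\,\frac{\cosh\tau}{\sinh^2\tau}\dd\tau ,
\]
and you assert without argument that the nonnegative remainder is $O(e^{\cosh t}/\sinh^2t)$.
\end{itemize}

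An argument for the last point is easy to supply, in either of two ways.
\begin{itemize}
\item Use l'H\^opital as the paper does, via $\frac{\dd}{\dd t}\bigl(e^{\cosh t}/\sinh t\bigr)=e^{\cosh t}\bigl(1-\cosh t/\sinh^2t\bigr)\sim e^{\cosh t}$.
\item Use absorption. Fix $T$ with $\cosh T/\sinh^2T\le\frac12$, which is possible because this ratio decreases to $0$. Run the same identity on $[T,t]$ and absorb the remainder to get $\int_T^te^{\cosh\tau}\dd\tau\le 2e^{\cosh t}/\sinh t$.
\end{itemize}
Either way, $|u|e^{-\cosh t}\le C/\sinh|t|$ for large $|t|$, which proves \eqref{eq-af} at $p=1$.
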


\begin{proof}
	Since every oriented surface carries a compatible complex structure, \(M\) becomes a Riemann surface. Moreover, the associated K\"ahler form is a top-degree form and hence closed, so \((M,g)\) is K\"ahler. We now prove it is complete. Set \(a(t)=e^{-\cosh t}\), then 
	\[
	    g=\dd t^2+a(t)^2\dd \theta^2, \qquad \dd v_g=a(t)\dd t\dd \theta .
	\]
	Let \(\gamma (s)=(t(s),\theta(s)):[a,b]\to M\) be a piecewise smooth curve. Then 
	\[
	    |\dot \gamma|_g^2=\dot t^2+a(t)^2\dot \theta ^2\ge \dot t^2.
	\]
	Hence
	\[
	    L_g(\gamma )\ge \int_a^b|\dot t|\dd s\ge |t(\gamma(b))-t(\gamma(a))|.
	\]
	Therefore, \(|t(x)-t(y)|\le d_g(x,y)\) for any \(x,y\in M\). Let \(\{x_j\}\) be a \(d_g\)-Cauchy sequence. Then 
	\[
	    |t(x_i)-t(x_j)|\le d_g(x_i,x_j),
	\]
	so \(\{t(x_j)\}\) converges in \(\bbR\). Therefore, there exists \(R>0\) such that \(\{x_j\}\subset [-R,R]\times S^1\). On this compact cylinder, 
	\[
	    0<c\le a(t)\le C<\infty .
	\]
	Therefore \(g\) is uniformly equivalent to the product metric \(\dd t^2+\dd \theta^2\). Hence the sequence converges in \(M\). Thus \((M,g)\) is complete. The pluriharmonicity of \(u\) follows easily. Since \(u\) depends only on \(t\), we have
	\[
	    \Delta_gu=\frac{1}{a(t)}\frac{\dd }{\dd t}\left(a(t)u'(t)\right)
	\]
	Now \(u'(t)=e^{\cosh t}=1/a(t)\). Consequently, \(a(t)u'(t)=1\), and hence \(\Delta_gu=0\). As \(\dim_{\bbC}M=1\), pluriharmonicity and harmonicity coincide, and hence \(u\) is pluriharmonic.

	We now prove the second part. Since \(e^{\cosh t}\) is even, the function \(u(t)=\int_0^t e^{\cosh\tau}\dd \tau \) is odd. It therefore suffices to study the behavior as \(t\to+\infty\). For \(t>0\), set 
	\[
	    U(t)=\int_0^te^{\cosh \tau }\dd \tau .
	\]
	In the remainder of this paragraph, all asymptotic relations are understood as \(t\to+\infty\). Since 
	\[
        \frac{\dd}{\dd t}\left(\frac{e^{\cosh t}}{\sinh t}\right)=e^{\cosh t}\left(1-\frac{\cosh t}{\sinh^2t}\right)\sim e^{\cosh t},
	\]
	l'H\^opital's rule gives
	\[
	    U(t)\sim \frac{e^{\cosh t}}{\sinh t}.
	\]
	It follows from \(u(t)=U(t)\) that
    \[
        |u(t)|^p\,dv_g\sim \frac{e^{-(1-p)\cosh t}}{(\sinh t)^p}\dd t\dd \theta.
    \]
    If \(0<p<1\), the right-hand side is integrable at \(+\infty\), because \(e^{-(1-p)\cosh t}\) decays super-exponentially. If \(p=1\), then
    \[
        |u(t)|\,dv_g\sim \frac{1}{\sinh t}\dd t\dd \theta,
    \]
    which is integrable at \(+\infty\). Thus \(u\in L^p(M)\) for every \(0<p\le 1\). Finally, because \(u\) depends only on \(t\), \(|\nabla u|=|u'|=e^{\cosh t}\). Therefore,
    \[
        \int_M|\nabla u|^p\dd v_g=2\pi \int_{\bbR}e^{-(1-p)\cosh t}\dd t.
    \]
    This is finite exactly when \(0<p<1\). For \(p=1\), 
    \[
        \int_M|\nabla u|\dd v_g=2\pi \int_{\bbR}\dd t=\infty .
    \]
    The completes the proof.
\end{proof}

\begin{remark}
	There is no conflict between Proposition~\ref{peop-b} and Yau's theorem for holomorphic function. In the oriented orthonormal coframe \(\dd t, e^{-\cosh t}\dd \theta \), one has
	\[
	    *\dd u=*\left(e^{\cosh t}\dd t\right)=\dd \theta .
	\] 
	The form \(\dd \theta \) has period \(2\pi \) around circle and is not exact. Thus \(u\) has local harmonic conjugates but no single-valued global harmonic conjugate. Equivalently, \(u\) is pluriharmonic but is not the real part of a globally defined holomorphic function. The example shows that the soliton structure is essential for the Liouville theorem under the assumption \(|\nabla u|\in L^p(M)\) with \(0<p<1\). Completeness and the K\"ahler condition alone do not suffice.
\end{remark}


\end{document}